\theoremstyle{plain} % Define theorem styles here based on the plain style (used for theorems, lemmas, propositions)
\newtheorem{theorem}{Theorem}[section]
\newtheorem{proposition}{Proposition}[section]
\newtheorem{lemma}[theorem]{Lemma}
\theoremstyle{definition} % Define theorem styles here based on the definition style (used for definitions and examples)
\theoremstyle{remark}
\newtheorem{remark}[theorem]{Remark}
\numberwithin{equation}{section}
\def\mO{\mathcal{O}}
\def\mQ{\mathcal{Q}}
\newcommand{\rd}{{\rm d}}
\newcommand{\R}{\mathbb{R}}
\newcommand{\bmv}{{\bm v}}
\newcommand{\bmj}{{\bm j}}
\newcommand{\bmk}{{\bm k}}
\newcommand{\bmxi}{{\bm \xi}}
\newcommand{\bmmu}{{\bm \mu}}
\newcommand{\bmsg}{{\bm \sg}}
\newcommand{\bmeta}{{\bm \eta}}
\newcommand{\bmzeta}{{\bm \zeta}}
\newcommand{\wtf}{\widetilde{f}}
\newcommand{\bmvs}{{\bm v_*}}
\newcommand{\dl}[2]{\frac{\mathrm{d} #1}{\mathrm{d} #2}}
\newcommand{\f}[2]{\frac{#1}{#2}}
\newcommand{\og}{\omega}
\newcommand{\dt}{\delta}
\newcommand{\nn}{\nonumber}
\newcommand{\ap}{\alpha}
\newcommand{\ld}{\lambda}
\newcommand{\tht}{\theta}
\newcommand{\ift}{\infty}
\newcommand{\ep}{\epsilon}
\newcommand{\Dt}{\Delta}
\newcommand{\sg}{\sigma}
\newcommand{\be}{\begin{equation}}
\newcommand{\ee}{\end{equation}}
\newcommand{\ba}{\begin{array}}
	\newcommand{\ea}{\end{array}}
\newcommand{\bea}{\begin{eqnarray}}
\newcommand{\eea}{\end{eqnarray}}
\newcommand{\beas}{\begin{eqnarray*}}
	\newcommand{\eeas}{\end{eqnarray*}}
\newcommand{\beit}{\begin{itemize}}
	\newcommand{\eit}{\end{itemize}}
\newcommand{\mL}{\mathcal{L}}
\def\mB{\mathcal{B}}
\newcommand{\bmK}{{\bm K}}
\newcommand{\bmz}{{\bm z}}
\newcommand{\bfi}{{\bf i}}
\begin{document}

\title{A fast Petrov-Galerkin spectral method for the multi-dimensional Boltzmann equation using mapped Chebyshev functions\footnote{JH and XH are partially supported by NSF CAREER grant DMS-1654152. JS is partially supported by NSF DMS-2012585 and AFOSR FA9550-20-1-0309. HY is partially supported by NSF CAREER grant DMS-1945029.}} % Article title

\author{
Jingwei Hu\footnote{Department of Mathematics, Purdue University, West Lafayette, IN 47907, USA (jingweihu@purdue.edu).}  \  \ 
\and
Xiaodong Huang\footnote{Department of Mathematics, Purdue University, West Lafayette, IN 47907, USA (huan1178@purdue.edu).}  \  \ 
\and % Uncomment if 2 authors are required, duplicate these 4 lines if more
Jie Shen\footnote{Department of Mathematics, Purdue University, West Lafayette, IN 47907, USA (shen7@purdue.edu). }  \  \ 
\and % Uncomment if 2 authors are required, duplicate these 4 lines if more
Haizhao Yang\footnote{Department of Mathematics, Purdue University, West Lafayette, IN 47907, USA (yang1863@purdue.edu).}
}

\maketitle

%-------------------------------------------------------------------
% ABSTRACT
%-------------------------------------------------------------------
\begin{abstract}
Numerical approximation of the Boltzmann equation presents a challenging problem due to its high-dimensional, nonlinear, and nonlocal collision operator. Among the deterministic methods, the Fourier-Galerkin spectral method stands out for its relative high accuracy and possibility of being accelerated by the fast Fourier transform. However, this method requires a domain truncation which is unphysical since the collision operator is defined in $\mathbb{R}^d$. In this paper, we introduce a Petrov-Galerkin spectral method for the Boltzmann equation in the unbounded domain. The basis functions (both test and trial functions) are carefully chosen mapped Chebyshev functions to obtain desired convergence and conservation properties. Furthermore, thanks to the close relationship of the Chebyshev functions and the Fourier cosine series, we are able to construct a fast algorithm with the help of the non-uniform fast Fourier transform (NUFFT). We demonstrate the superior accuracy of the proposed method in comparison to the Fourier spectral method through a series of 2D and 3D examples.
\end{abstract}
%-------------------------------------------------------------------
% KEYWORDS
%-------------------------------------------------------------------

{\small {\bf Keywords.} Boltzmann equation; Petrov-Galerkin spectral method; mapped Chebyshev function; unbounded domain, NUFFT.}

{\small {\bf AMS subject classifications.} 35Q20, 65M70.}

%----------------------------------------------------------------------------------------
% INTRODUCTION
%----------------------------------------------------------------------------------------

\section{Introduction}
\label{Sec:intro}

In multi-scale modeling, kinetic theory serves as a basic building block that bridges microscopic particle models and macroscopic continuum models. By tracking the probability density function, kinetic equations describe the non-equilibrium dynamics of the complex particle systems and have been widely used in disparate fields such as rarefied gas dynamics \cite{Cercignani00}, plasma physics \cite{BL}, nuclear reactor modeling \cite{Chandrasekhar}, chemistry \cite{Giovangigli}, biology, and socioeconomics \cite{NPT}. 

In this paper, we consider the numerical approximation of the Boltzmann equation, one of the fundamental equations in kinetic theory \cite{Cercignani, Villani02}. The complete equation include both particle transport and collisions which are often treated separately by operator splitting. Since the collision part is the main difficulty when numerically solving the equation, we focus on the following spatially homogeneous Boltzmann equation:
\be
   \partial_tf = Q (f,f),\quad
   t>0,\quad
   \bmv\in\R^d, \quad d=2,3,
   \label{Boltz_eqn}
\ee 
where $f=f(t,\bmv)$ is the probability density function of time $t$ and velocity $\bmv$, and $Q (f,f)$ is the collision operator whose bilinear form is given by
\be
   Q (g,f)(\bmv) = 
   \int_{\R^d} \int_{S^{d-1}}
    \mathcal{B} (\bmv,\bmvs,{\bm \sg})
   \left[g({\bm v'_*})f(\bmv')  -g({\bm v_*})f(\bmv) \right]\,
   {\rm d} {\bm \sg} \,{\rm d} \bmvs,
   \label{collison_operator}
\ee 
where the post-collisional velocities $(\bmv', {\bm v'_*})$ are defined in terms of the pre-collisional velocities $(\bmv, {\bm v_*})$ as
\be 
   \begin{cases}
   {\bm v'} = \f{1}{2} (\bmv+\bmvs) + \f{1}{2} |\bmv-\bmvs| {\bm \sg}, \vspace{3pt}\\
   {\bm v'_*} = \f{1}{2} (\bmv+\bmvs) - \f{1}{2} |\bmv-\bmvs| {\bm \sg},
   \end{cases}
\ee 
with ${\bm \sg}$ being a vector over the unit sphere $S^{d-1}$. The collision kernel $\mathcal{B}$ takes the form
\be
   \mathcal{B}(\bmv,\bmv_*,{\bm \sg})=B(|\bmv - \bmv_*|,\cos \tht), \quad
   \cos \tht = \left< \f{\bmv - \bmv_*}{|\bmv - \bmv_*|}, {\bm \sg} \right>,
   \label{collision_form}
\ee 
i.e., it is a function depending only on the relative velocity $|\bmv - \bmv_*|$ and cosine of the scattering angle. The collision operator $Q(f,f)$ satisfies many important physical properties, including conservation of mass, momentum, and energy:
\begin{equation} \label{consv}
\int_{\mathbb{R}^d}Q(f,f)\,\rd{\bmv}=\int_{\mathbb{R}^d}Q(f,f)\bmv\,\rd{\bmv}=\int_{\mathbb{R}^d}Q(f,f)|\bmv|^2\,\rd{\bmv}=0,
\end{equation}
and the Boltzmann's H-theorem:
\begin{equation}
\int_{\mathbb{R}^d}Q(f,f)\log f\,\rd{\bmv}\leq 0.
\end{equation}

In the physically relevant case ($d=3$), the collision operator is a five-fold quadratic integral whose numerical approximation can be extremely challenging. The stochastic methods, such as the direct simulation Monte Carlo (DSMC) methods proposed by Nanbu \cite{Nanbu80} and Bird \cite{Bird}, have been historically popular due their simplicity and efficiency. However, like any Monte Carlo methods, they suffer from slow convergence and high statistical noise especially for low-speed and unsteady flows. In the past two decades, the deterministic methods have undergone extensive development largely due to the advance in computing powers, see \cite{Pareschi} for a recent review.

Among the deterministic methods for the Boltzmann equation, the Fourier-Galerkin spectral method stands out for its relative high accuracy and possibility of being accelerated by the fast Fourier transform (see, for instance, \cite{PR00, MP06, GT09, GHHH17} for major algorithmic development and \cite{PR00stability, FM11, AGT18, HQY21} for stability and convergence analysis). Although being a method with reasonable efficiency and accuracy tradeoff, the Fourier spectral method requires a domain truncation which is unphysical since the original collision operator is defined in $\mathbb{R}^d$. This truncation changes the structure of the equation and often comes with an accuracy loss. Inspired by the recent work \cite{hu2020petrov} of the two authors here where a spectral method was introduced for the 1D inelastic Boltzmann equation, we develop in this paper a Petrov-Galerkin spectral method for the Boltzmann equation  (\ref{Boltz_eqn})\footnote{Unlike the inelastic Boltzmann equation which has a non-trivial solution in 1D, the classical Boltzmann equation (\ref{Boltz_eqn}) must be considered at least for $d\geq 2$.} using mapped Chebyshev functions in $\mathbb{R}^d$. Both the test functions and trial functions are carefully chosen to obtain desired approximation properties. Furthermore, thanks to the close relationship of the Chebyshev functions and the Fourier cosine series, we are able to construct a fast algorithm with the help of the non-uniform fast Fourier transform (NUFFT). This speedup is critical as the direct implementation of the proposed method would require excessive storage for precomputation and significant online computational cost that soon become a bottleneck for larger $N$ (the number of spectral modes). Extensive numerical tests in 2D and 3D are performed to demonstrate the accuracy and efficiency of the proposed method. In particular, the comparison with the Fourier spectral method in \cite{GHHH17} indeed confirms the better approximation properties of the proposed method. Finally, we mention some recent spectral methods \cite{fonn2014polar, gamba2018galerkin, kitzler2019polynomial, HC20, HCW20} for the Boltzmann equation that use other orthogonal polynomial bases in $\mathbb{R}^d$. Up to our knowledge, our method is the first with a fast implementation and a consistency analysis. 

The rest of this paper is organized as follows. In Section~\ref{Sec:MCspectral}, we introduce the mapped Chebyshev functions in $\mathbb{R}^d$ along with their approximation properties. In Section~\ref{Sec:PGMCFspectral}, we construct the Petrov-Galerkin spectral method for the Boltzmann equation using the mapped Chebyshev functions as trial and test functions. The approximation properties for the collision operator and moments are proved as well. The numerical realization including the fast algorithm is described in detail in Section~\ref{Sec:num_realiz}. In Section~\ref{Sec:num}, several numerical tests in 2D and 3D are performed to demonstrate the accuracy and efficiency of the proposed method. The paper is concluded in Section~\ref{Sec:con}.

%----------------------------------------------------------------------------------------
% Mapped Chebyshev spectral method
%----------------------------------------------------------------------------------------
\section{Multi-dimensional mapped Chebyshev functions}
\label{Sec:MCspectral}

In this section, we introduce the mapped Chebyshev functions in $\mathbb{R}^d$ and discuss their approximation properties. These functions are extension of the one-dimensional mapped Chebyshev functions introduced in \cite{hu2020petrov} based on tensor product formulation \cite{shen2011spectral,shen2014approximations}. Later in Section~\ref{Sec:PGMCFspectral}, they will serve as the trial functions and test functions in the Petrov-Galerkin spectral method for the Boltzmann equation.

%----------------------------------------------------------------------------------------
\subsection{Mapped Chebyshev functions in $\mathbb{R}^d$}
\label{Subsec:Basis_function}

To define the mapped Chebyshev functions in $\mathbb{R}^d$, we start with the one-dimensional Chebyshev polynomials  on the interval $I=(-1,1)$:
\be
T_0(\xi) = 1,\quad T_1(\xi) = \xi, \quad T_{k+1}(\xi) = 2\xi T_k(\xi) - T_{k-1}(\xi), \quad k\geq 1.
\ee
Define the inner product $(\cdot,\cdot)_{\omega}$ as
\begin{equation}
(F,G)_{\omega}:=\int_I F(\xi)G(\xi)\omega(\xi)\, \rd{\xi}, \quad \omega(\xi)=(1-\xi^2)^{-\frac{1}{2}},
\end{equation}
then $\{T_k(\xi)\}_{k\geq 0}$ satisfy the orthogonality condition
\be 
   (T_k,T_l )_{\og} = c_k\dt_{k,l}, \quad \forall \ k, l \geq 0,
\ee
where $c_0=\pi$ and $c_k=\pi/2$ for $k\geq 1$.

We then introduce a one-to-one mapping $\xi \rightarrow v(\xi)$ (its inverse is denoted as $v \rightarrow \xi(v)$) from $I$ to $\mathbb{R}$ such that
\be 
   \f{{\rm d} v}{{\rm d}\xi} = \f{S}{(1-\xi^2)^{1+\frac{r}{2}}} := \f{\og(\xi)}{[\mu(\xi)]^2}, \quad
 v(\pm 1)= \pm \ift, 
   \label{1D_mapping}
\ee
where $S>0$ is a scaling parameter, $r\geq0$ is the tail parameter, and the function $\mu$ is given by
\be
   \mu(\xi) = \f{(1-\xi^2)^{\f{1+r}{4}}}{\sqrt{S}}.
     \label{1D_mu_function}
\ee
With this mapping we define two sets of mapped Chebyshev functions in $\R$ as
\be  \label{1DCheby}
  \widetilde{T}_k(v) := \f{[\mu(\xi(v))]^{4}}{\sqrt{c_k}} T_k(\xi(v)), \quad \widehat{T}_k(v) := \f{[\mu(\xi(v))]^{-2}}{\sqrt{c_k}} T_k(\xi(v)).
\ee
Define the inner product $(\cdot,\cdot)_{\mathbb{R}}$ as
\begin{equation}
(f,g)_{\mathbb{R}}:=\int_{\mathbb{R}} f(v)g(v)\, \rd{v},
\end{equation}
then it is easy to check that $\{ \widetilde{T}_k(v)\}_{k\geq 0}$ and $\{ \widehat{T}_k(v)\}_{k\geq 0}$ satisfy the orthonormal condition:
\begin{equation}
 (\widetilde{T}_k, \widehat{T}_l )_{\mathbb{R}}=\delta_{k,l}, \quad \forall \ k, l \geq 0.
\end{equation}

\begin{remark}
   Two one-to-one mappings between $I$ and $\R$ often used in practice are 
   \begin{itemize}
      \item {\bf logarithmic mapping} $(r = 0)$:
      \be \label{logarithmic}
         v= \f{S}{2} \ln \left(\f{1+\xi}{1-\xi}\right),\quad
         \xi = \tanh\left(\f{v}{S}\right),\quad 
         \mu(\xi) = \f{1}{\sqrt{S}}(1-\xi^2)^{\frac{1}{4}},
      \ee 
      \item {\bf algebraic mapping} $(r = 1)$:
      \be \label{algebraic}
         v = \f{S\xi}{\sqrt{1-\xi^2}},\quad
         \xi = \f{v}{\sqrt{S^2+v^2}},\quad 
         \mu(\xi) = \f{1}{\sqrt{S}}(1-\xi^2)^{\frac{1}{2}}.
      \ee 
   \end{itemize}
\end{remark}

In the multi-dimensional case, we denote the multi-vector as ${\bm v}=(v_1,\dots,v_d)$ and multi-index as $\bmk= (k_1,\ldots,k_d)$, where $k_j$ is a non-negative integer for each $j=1,\dots, d$; $0\le \bmk \le N$ means $0\le k_j \le N$ for each $j=1,\dots, d$. We define the mapped Chebyshev functions in $\mathbb{R}^d$ using (\ref{1DCheby}) via the tensor product as
\be 
   \widetilde{\bm T}_\bmk({\bmv}) := \prod_{j=1}^d \widetilde{T}_{k_j}(v_j), \quad    \widehat{\bm T}_\bmk(\bmv) := \prod_{j=1}^d \widehat{T}_{k_j}(v_j).
\ee
The inner products $(\cdot,\cdot)_{\bm \og}$ in $I^d=(-1,1)^d$ and $(\cdot,\cdot)_{\R^d}$ in $\R^d$ are defined, respectively, by
\be
   (F,G)_{\bm \og} := \int_{I^d} F({\bm \xi}) G({\bm \xi})  {\bm \og}({\bm \xi})\, {\rm d}{\bm \xi},\quad
   (f,g)_{\R^d} := \int_{\R^d} f(\bmv) g(\bmv)\, {\rm d}{\bm v},
\ee
with the weight function ${\bm \og}({\bm \xi}):=\prod_{j=1}^d \omega(\xi_j)$. 
Then we have
\be  \label{ortho}
   \left(\widetilde{\bm T}_\bmk,\widehat{\bm T}_{\bm l} \right)_{\R^d} 
   = \prod_{j=1}^d \left(\widetilde{T}_{k_j},\widehat{T}_{l_j}\right)_{\R} 
   =\prod_{j=1}^d \delta_{k_j,l_j}
   =: {\bm \dt}_{\bmk,{\bm l}} .
\ee 
Therefore, any $d$-variate function $f(\bmv)$ can be expanded using $\{\widetilde{\bm T}_\bmk(\bmv)\}_{\bmk \geq 0}$ as
\be 
   f(\bmv) 
      = \sum_{\bmk\geq 0} \widetilde{f}_\bmk \widetilde{\bm T}_\bmk(\bmv)
      = \sum_{\bmk\geq 0} \widetilde{f}_\bmk \frac{[{\bm \mu}({\bm \xi})]^{4}}{\sqrt{\bm{c}_\bmk}}{\bm T}_\bmk({\bm \xi}),
\ee 
and the expansion coefficients $\{\widetilde{f}_\bmk\}_{\bmk\geq 0}$ are determined by 
\bea 
\widetilde{f}_\bmk 
= \left(f, \widehat{\bm T}_\bmk \right)_{\R^d} 
= \f{1}{\sqrt{\bm{c}_\bmk}} \left([{\bm \mu}({\bm \xi})]^{-4}f(\bmv({\bm \xi})), {\bm T}_\bmk ({\bm \xi})\right)_{\bm \og},
\label{MultiD_spectral_coef} 
\eea 
where
\be 
   {\bm \mu} ({\bm \xi}) := \prod_{j=1}^d \mu(\xi_j), \quad
   \bm{c}_\bmk := \prod_{j=1}^{d} c_{k_j}, \quad
   {\bm T}_\bmk({\bm \xi}) := \prod_{j=1}^{d} T_{k_j} (\xi_j),
   \label{multiD_mu_c_Tk}
\ee 
and $\bmv({\bm \xi})$ is the mapping from $I^d$ to $\mathbb{R}^d$ such that each component $\xi_j$ is mapped to $v_j$ via the 1D mapping (\ref{1D_mapping}). The inverse mapping ${\bm \xi}(\bmv)$ is understood similarly.

In Section~\ref{Sec:PGMCFspectral}, we will introduce the Petrov-Galerkin spectral method for the Boltzmann equation in $\mathbb{R}^d$, where the trial function space and test function space are chosen, respectively, as 
\begin{equation}
\widetilde{\mathbb{T}}_N^d := \{\widetilde{\bm T}_\bmk(\bmv)\}_{0\le \bmk\le N}, \quad \widehat{\mathbb{T}}_N^d := \{\widehat{\bm T}_\bmk(\bmv)\}_{0\le \bmk\le N}.
\end{equation}
The choice of these functions is motivated by their decay/growth properties at large $|\bmv|$. The following 
result  is a straightforward extension of the 1D result in \cite{hu2020petrov}.
\begin{lemma}
   For any $\bm{k}\ge 0$ and $|\bmv| \gg 1$, we have
   \bea 
      \left|\widetilde{\bm T}_\bmk({\bm v})\right| 
      \sim  \begin{cases}
                  e^{-\frac{2}{S}(\sum_{j=1}^d |v_j|)}, & r=0, \\
                  \prod_{j=1}^d |v_j|^{-4}, & r=1;
         \end{cases}\quad
      \left|\widehat{\bm T}_\bmk ({\bm v}) \right| 
      \sim  \begin{cases}
                    e^{\frac{1}{S}(\sum_{j=1}^d |v_j|)}, & r=0, \\
                  \prod_{j=1}^d |v_j|^{2}, & r=1,
         \end{cases}
   \eea 
where $r=0$ corresponds to the logarithmic mapping (\ref{logarithmic}) and $r=1$ to the algebraic mapping (\ref{algebraic}).
\end{lemma}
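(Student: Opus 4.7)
The lemma is essentially a one-dimensional computation dressed up in tensor-product notation, so the plan divides naturally into three short steps.

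First, I would exploit the tensor structure $\widetilde{\bm T}_\bmk(\bmv) = \prod_{j=1}^d \widetilde{T}_{k_j}(v_j)$ and likewise for $\widehat{\bm T}_\bmk$, so that $|\widetilde{\bm T}_\bmk(\bmv)| = \prod_{j=1}^d |\widetilde{T}_{k_j}(v_j)|$. The problem thereby reduces to establishing, for each fixed $k \geq 0$ and $|v| \gg 1$, the one-dimensional asymptotics
\begin{equation*}
|\widetilde{T}_k(v)| \sim \begin{cases} e^{-2|v|/S}, & r=0,\\ |v|^{-4}, & r=1, \end{cases} \qquad |\widehat{T}_k(v)| \sim \begin{cases} e^{|v|/S}, & r=0,\\ |v|^2, & r=1. \end{cases}
\end{equation*}
Taking the $d$-fold product then converts these single-variable estimates into a sum $\sum_j |v_j|$ (for $r=0$) or a product $\prod_j |v_j|$ (for $r=1$) exactly matching the statement. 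This reduction is precisely why the excerpt labels the result a straightforward extension of \cite{hu2020petrov}.

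Next, I would isolate the controlling factor in the 1D formulas $\widetilde{T}_k(v) = c_k^{-1/2} [\mu(\xi(v))]^{4} T_k(\xi(v))$ and $\widehat{T}_k(v) = c_k^{-1/2} [\mu(\xi(v))]^{-2} T_k(\xi(v))$. The Chebyshev polynomial $T_k$ is uniformly bounded on $[-1,1]$ with $T_k(\pm 1) = (\pm 1)^k \neq 0$, so, as $|v|\to\infty$ (equivalently $\xi(v)\to\pm 1$), $T_k(\xi(v))$ tends to a nonzero constant and does not affect the $\sim$ relation. All the asymptotic behavior therefore comes from the appropriate power of $\mu(\xi(v))$.

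Finally, I would expand $\mu$ in the two mapping cases. For the logarithmic mapping $\xi = \tanh(v/S)$, the identity $1-\xi^2 = \operatorname{sech}^2(v/S) \sim 4 e^{-2|v|/S}$ yields $[\mu(\xi)]^{4} = (1-\xi^2)/S^2 \sim e^{-2|v|/S}$ and $[\mu(\xi)]^{-2} = S/(1-\xi^2)^{1/2} \sim e^{|v|/S}$, up to positive constants. For the algebraic mapping $\xi = v/\sqrt{S^2+v^2}$, one reads off $1-\xi^2 = S^2/(S^2+v^2) \sim v^{-2}$, hence $[\mu(\xi)]^{4} \sim |v|^{-4}$ and $[\mu(\xi)]^{-2} \sim v^{2}$. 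Combining with the previous step gives the 1D asymptotics, and the tensor product from the first step produces the claim.

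No substantive obstacle is anticipated: each ingredient is a routine leading-order expansion, and the multi-dimensional bookkeeping is purely algebraic via the tensor-product separation. The only small checks are that the large-argument asymptotics of $\operatorname{sech}$ and $(1-\xi^2)^{1/2}$ are applied in the correct regime and that $T_k(\pm 1) \neq 0$, both of which are immediate.
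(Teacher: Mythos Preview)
Your proposal is correct and mirrors exactly what the paper intends: the paper does not give a proof but states that the lemma is a straightforward extension of the 1D result in \cite{hu2020petrov}, and your reduction via the tensor-product structure to the 1D asymptotics of $\mu(\xi(v))$ is precisely that extension. The leading-order computations for both mappings are accurate.
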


%----------------------------------------------------------------------------------------

\subsection{Approximation properties}
\label{subsec:approximation}

We describe below some approximation properties of the mapped Chebyshev functions in $\mathbb{R}^d$. 
%The result follows a similar argument in 1D \cite{hu2020petrov}. 

For a function $f(\bm v)$ defined in $\mathbb{R}^d$, the transform $\bmv(\bmxi)$ maps it to a function in $I^d$. Hence, we introduce the linked function pair $(f,F)$ such that $f(\bmv)=f(\bmv(\bm \xi))\equiv F(\bm \xi)$. In addition, we introduce another function pair $(\widehat{f}^{\ap}, \widehat{F}^{\ap})$ as
\be
   \widehat{f}^{\ap} ({\bmv})
   := {f}({\bm v}) [\bm{\mu}(\bmxi(\bm{v}))]^{-\ap}
   = {F}(\bmxi) [\bm{\mu}(\bmxi)]^{-\ap}
   =: \widehat{F}^{\ap} (\bmxi).
\ee
We define the approximation space in $\mathbb{R}^d$ with a parameter $\ap$ as
\be 
   \mathbb{V}_N^{\ap,d}(\mathbb{R}^d):= \text{span}\left\{{\bm T}_\bmk^{\ap}({\bm v}) 
   := [{\bm \mu}({\bm \xi}({\bm v}))]^{\ap} {\bm T}_\bmk ({\bm \xi}({\bm v})),\ 0\le \bmk\le N \right\}. 
\ee 
Therefore, the trial function space $\widetilde{\mathbb{T}}_N^d$ and test function space $\widehat{\mathbb{T}}_N^d$ introduced in the previous section correspond to $ \mathbb{V}_N^{4,d}$ and $ \mathbb{V}_N^{-2,d}$, respectively. 

In the following, the $L^2$ space with a given weight ${\bm w}$ is equipped with norm
\begin{equation}
\|f\|_{L_{\bm w}^2(I^d)}=\left(\int_{I^d}|f(\bmxi)|^2{\bm w}(\bmxi)\,\rd{\bmxi}\right)^{1/2} \  \text{ or }  \  \|f\|_{L_{\bm w}^2(\R^d)}=\left(\int_{\R^d}|f(\bmv)|^2{\bm w}(\bmv) \,\rd{\bmv}\right)^{1/2},
\end{equation}
depending on the domain of interest.

Let $\mathbb{P}_N^d(I^d)$ denote the set of $d$-variate polynomials in $I^d$ with degree $\leq N$ in each direction, and $\Pi_N^{d}: L_{{\bm \og}}^2 (I^d)\to \mathbb{P}_N^d(I^d)$ be the Chebyshev orthogonal projection operator such that
\be
   \left( \Pi_N^{d} {F} - {F}, {\phi} \right)_{{\bm \og}}=0,\quad
   \forall {\phi} \in {\mathbb P}_N^d(I^d).
\ee
Then we define another projection operator $\pi_N^{\ap,d}: L_{\bm{\mu}^{2-2\ap}}^2 ({\mathbb R}^d) \to {\mathbb V}_N^{\ap,d}(\mathbb{R}^d)$ by
\be
   \pi_N^{\ap,d} f
   :=
   \bm{\mu}^{\ap} \Pi_N^{d} (F\bm{\mu}^{-\ap} )
   =
   \bm{\mu}^{\ap} \Pi_N^{d} \widehat{F}^{\ap}.
   \label{MCFs_projector}
\ee
One can verify using the definition that
\bea
   \left(\pi_N^{\ap,d}f - f,\bm{\mu}^{2-2\ap} \bm{T}_\bmk^{\ap}
   \right)_{\R^d}
   &=& \int_{\R^d} 
   (\pi_N^{\ap,d}f - f) 
  \bm{\mu}^{2-\ap}  \bm{T}_\bmk (\bmxi(\bm{v})) 
   \, {\rm d} {\bm v}\nn \\
   &=& \int_{I^d}
   \left[\bm{\mu}^{\ap} \Pi_N^{d} \widehat{F}^{\ap} - \bm{\mu}^{\ap} \widehat{F}^{\ap} \right]
   {\bm T}_\bmk ({\bm \xi})
   \bm{\mu}^{2-\ap} 
   \f{\bm{\og}({\bm \xi})}{\bm{\mu}^2}
   \, {\rm d} {\bm \xi} \nn \\
   &=& \left(\Pi_N^{d} \widehat{F}^{\ap}-\widehat{F}^{\ap}, {\bm T}_\bmk \right)_{\bm \og} = 0,\quad
   \forall \ 0\le \bmk \le N.
   \label{projection_verified}
\eea

Next, we introduce the function space $\bm{B}_{\ap}^m (\R^d)$ equipped with the norm 
\be
   \left\| f \right\|_{\bm{B}_{\ap}^m (\R^d)}
   =\left(\sum_{0\le \bm{k} \le m}
   \left\| {\bm D}_{\ap, \bm{v}}^\bmk f\right\|_{L_{\bm{\varpi}^{\bm{k}+\frac{1+r}{2}{\bf 1}}}^2 ({\mathbb R}^d)}^2
   \right)^{1/2},
\ee
and semi-norm
\be
   \left| f \right|_{\bm{B}_{\ap}^m (\R^d)}
   = \left(\sum_{j=1}^d 
   \left\|D_{\ap, v_j}^m f \right\|_{L_{\bm{\varpi}^{m \bm{e}_j+\frac{1+r}{2}{\bf 1}}}^2 ({\mathbb R}^d)}^2
   \right)^{1/2},
\ee
where $\bf 1$ is an all-one vector, $\bm{e}_j =(0,\dots,1,\dots,0)$ with $1$ in the $j$-th position and $0$ elsewhere, and
\be 
   \bm{D}_{\ap,\bm v}^\bmk f :=
   D_{\ap, v_1}^{k_1} \cdots D_{\ap, v_d}^{k_d} f,\quad
   \bm{\varpi}^{\bmk} 
   := \prod_{j=1}^d 
   (1-\xi(v_j)^2)^{k_j },
\ee
with
\be
D_{\ap , v_j}^{k_j} f
:= \underbrace{a(v_j) \frac{\partial}{\partial v_j}   
      \left( a(v_j) \frac{\partial}{\partial v_j} \left(
      \ldots \left(a(v_j)\frac{\partial\widehat{f}^{\ap}}{\partial v_j}\right) \ldots
      \right)\right)
   }_{k_j \text{ times derivatives}} 
   =\frac{\partial^{k_j} \widehat{F}^{\ap}}{\partial \xi_j} ,
\ee
where $a(v_j) := \dl{v_j}{\xi_j}$ is determined by the mapping.

We have the following approximation result.
\begin{theorem}
\label{Md_OMCF_approx}
  Let $\ap \in \R$, $r \ge 0$. If $ f\in \bm{B}_{\ap}^m (\R^d)$, we have 
   \be
      \left\| \pi_N^{\ap, d} f - f \right\|_{L_{\bm{\mu}^{2-2\ap}}^2 ({\mathbb R}^d)}
      \le
      C N^{-m} \left| f\right|_{\bm{B}_{\ap}^m(\R^d)}.
   \ee
\end{theorem}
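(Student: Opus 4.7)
The plan is to reduce the weighted approximation error in $\mathbb{R}^d$ to a classical tensor-product Chebyshev projection error on $I^d$ via the one-to-one mapping $\bmv(\bmxi)$, whose Jacobian is $\rd{\bmv} = \bm{\omega}(\bmxi)/[\bm{\mu}(\bmxi)]^2 \, \rd{\bmxi}$ by \eqref{1D_mapping} tensorized over the $d$ coordinates. This Jacobian is specifically designed so that the various powers of $\bm{\mu}$ appearing in the norm, in the definition \eqref{MCFs_projector} of $\pi_N^{\alpha,d}$, and in the weighted volume element cancel cleanly.

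First I would use \eqref{MCFs_projector} to write $\pi_N^{\alpha,d}f - f = \bm{\mu}^{\alpha}(\Pi_N^d \widehat{F}^{\alpha} - \widehat{F}^{\alpha})$, and then perform the change of variables:
\begin{equation*}
\left\| \pi_N^{\alpha,d} f - f \right\|_{L^2_{\bm{\mu}^{2-2\alpha}}(\mathbb{R}^d)}^2
= \int_{I^d} \left|\Pi_N^d \widehat{F}^{\alpha} - \widehat{F}^{\alpha}\right|^2 \bm{\mu}^{2\alpha}\cdot \bm{\mu}^{2-2\alpha}\cdot \frac{\bm{\omega}}{\bm{\mu}^2}\, \rd{\bmxi}
= \left\|\Pi_N^d \widehat{F}^{\alpha} - \widehat{F}^{\alpha}\right\|_{L^2_{\bm{\omega}}(I^d)}^2.
\end{equation*}
This reduces the problem to bounding the standard tensor-product Chebyshev projection error on $I^d$.

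Next, I would invoke the classical multi-dimensional Chebyshev approximation estimate (see, e.g., Shen--Tang--Wang), which yields
\begin{equation*}
\left\|\Pi_N^d G - G\right\|_{L^2_{\bm{\omega}}(I^d)}^2
\le C\, N^{-2m} \sum_{j=1}^d \left\|\partial_{\xi_j}^m G\right\|_{L^2_{w_j^{(m)}}(I^d)}^2,
\qquad w_j^{(m)}(\bmxi) = (1-\xi_j^2)^{m-\frac{1}{2}} \prod_{i\neq j}(1-\xi_i^2)^{-\frac{1}{2}},
\end{equation*}
applied to $G = \widehat{F}^{\alpha}$. By the definition of the derivative operator in Section~\ref{subsec:approximation}, we have the identity $\partial_{\xi_j}^m \widehat{F}^{\alpha} = D_{\alpha,v_j}^m f$, so only the weights remain to be matched.

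The last step, and the only place where a bit of care is needed, is to translate each $I^d$-norm back to $\mathbb{R}^d$ using the same Jacobian. Since $\bm{\mu}^{-2} = S^d \prod_j (1-\xi_j^2)^{-(1+r)/2}$, a direct computation shows
\begin{equation*}
\left\|\partial_{\xi_j}^m \widehat{F}^{\alpha}\right\|_{L^2_{w_j^{(m)}}(I^d)}^2
= S^{-d}\left\| D_{\alpha,v_j}^m f \right\|_{L^2_{\bm{\varpi}^{m \bm{e}_j+\frac{1+r}{2}{\bf 1}}}(\mathbb{R}^d)}^2,
\end{equation*}
because the exponents of $(1-\xi_j^2)$ combine as $(m+\tfrac{1+r}{2}) - \tfrac{1}{2} - \tfrac{1+r}{2} = m-\tfrac{1}{2}$ in the $j$-th factor and as $\tfrac{1+r}{2}-\tfrac{1}{2}-\tfrac{1+r}{2} = -\tfrac{1}{2}$ in the others. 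Summing over $j$ produces exactly $|f|^2_{\bm{B}_\alpha^m(\mathbb{R}^d)}$ (up to the harmless constant $S^{-d}$ absorbed into $C$), which completes the proof. The main obstacle is really only the algebraic bookkeeping of the weights in this final step; the functional-analytic content is entirely supplied by the classical one-/multi-dimensional Chebyshev projection estimate.
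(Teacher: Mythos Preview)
Your proposal is correct and follows essentially the same approach as the paper's proof: reduce the weighted $\mathbb{R}^d$ error to the standard tensor-product Chebyshev projection error on $I^d$ via the change of variables $\bmv\leftrightarrow\bmxi$, invoke the classical multivariate Chebyshev estimate (the paper cites Theorem~2.1 in \cite{shen2010sparse}, with weight $\bm{\varpi}^{m\bm{e}_j-\frac{1}{2}\mathbf{1}}$, which is exactly your $w_j^{(m)}$), and then map the resulting $I^d$ seminorms back to $\mathbb{R}^d$. The only cosmetic difference is that you track the factor $S^{-d}$ explicitly whereas the paper silently absorbs it into the constant $C$.
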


\begin{proof}
Note that
\bea 
   \left\| \pi_N^{\ap, d} f - f \right\|_{L_{\bm{\mu}^{2-2\ap}}^2 ({\mathbb R}^d)}^2
   &=& \int_{\R^d} 
   (\pi_N^{\ap,d} f - f)^2
   \bm{\mu} ^{2-2\ap} 
   \, {\rm d} {\bm v}\nn \\
   &=& \int_{I^d}
   \left[\bm{\mu}^{\ap} \Pi_N^{d} \widehat{F}^{\ap} - \bm{\mu}^{\ap} \widehat{F}^{\ap}\right]^2
   \bm{\mu}^{2-2\ap} 
   \f{\bm{\og}({\bm \xi})}{\bm{\mu}^2}
   \, {\rm d} {\bm \xi} \nn \\
     &=& \left\|\Pi_N^{d} \widehat{F}^{\ap} - \widehat{F}^{\ap}\right\|_{L_{\bm{\og}}^2 (I^d)}^2.\nn
\eea

By the multi-variate (full tensor product) Chebyshev approximation result (Theorem 2.1 in \cite{shen2010sparse}), we know
\be
   \left\|\Pi_N^{d} \widehat{F}^{\ap} - \widehat{F}^{\ap} \right\|_{L_{ {\bm \og}}^2(I^d)} 
   \le 
   C N^{-m} \left(\sum_{j=1}^d 
   \left\|\partial_{\xi_j}^m \widehat{F}^{\ap}\right\|_{L_{{\bm \varpi}^{m \bm{e}_j-\frac{1}{2}{\bf 1}}}^2 (I^d)}^2
   \right)^{1/2}. \nn
\ee
Hence,
\bea
   && \left\| \pi_N^{\ap, d} f - f\right\|_{L_{\bm{\mu}^{2-2\ap}}^2 ({\mathbb R}^d)} %\nn \\
   =\left\|\Pi_N^{d} \widehat{F}^{\ap} - \widehat{F}^{\ap}\right\|_{L_{\bm{\og}}^2 (I^d)} \nn \\
   &\le& 
   C N^{-m} \left(\sum_{j=1}^d 
   \left\|\partial_{\xi_j}^m \widehat{F}^{\ap}\right\|_{L_{\bm{\varpi}^{m \bm{e}_j-\frac{1}{2}{\bf 1}}}^2 (I^d)}^2 \right)^{1/2} \nn \\
  &\leq&
   CN^{-m} \left(\sum_{j=1}^d 
   \left\|D_{\ap, v_j}^m f \right\|_{L_{\bm{\varpi}^{m \bm{e}_j+\frac{1+r}{2}{\bf 1}}}^2 ({\mathbb R}^d)}^2
   \right)^{1/2}\nn\\
   &=& 
   C N^{-m} \left| {f}\right|_{\bm{B}_{\ap}^m (\R^d)}. \nn
\eea
\end{proof}

%%%%%%%%%%%%%%%%%%%%%%%%%%%%%%%%%%%%%%%%%%%%%%%%%%%
\section{A Petrov-Galerkin spectral method for the Boltzmann equation}
\label{Sec:PGMCFspectral}

We consider the initial value problem
\be
   \begin{cases}
    \partial_t f(t, \bmv)= Q (f,f), & t>0,  \quad \bmv\in \R^d, \\
      f(0, \bmv) = f^0 (\bmv),
   \end{cases}
   \label{MultiDBoltz}
\ee 
where $Q(f,f)$, in a strong form,  is given by (\ref{collison_operator}). To construct the Petrov-Galerkin spectral method, the following weak form of the collision operator is more convenient:
\be  \label{weak}
   \left( Q(f,f), \phi \right)_{\R^d}= \int_{\mathbb{R}^d} Q(f,f)(\bmv)\phi(\bmv)\,\rd{\bmv}=
      \int_{\R^d} \int_{\R^d} \int_{S^{d-1}}
   \mathcal{B} (\bmv,\bmvs,{\bm \sg})
   f(\bmv) f (\bmvs) [\phi(\bmv') -\phi(\bmv)] \, \rd{\bmsg} \, \rd{\bmv} \, \rd{\bmvs},
\ee 
where $\phi(\bmv)$ is a test function.

We look for an approximation of $f$ in the trial function space $\widetilde{\mathbb{T}}_N^d$ as
\be
   f(t, \bmv)
   \approx
   f_N(t, \bmv) 
   = \sum_{0\le \bmk\le N} \wtf_\bmk(t)
   \widetilde{\bm T}_\bmk(\bmv) \in \widetilde{\mathbb{T}}_N^d.
   \label{fN_Expansion}
\ee
Substituting $f_N$ into (\ref{MultiDBoltz}) and requiring the residue of the equation to be orthogonal to the test function space $\widehat{\mathbb{T}}_N^d$, we obtain
\be \label{petrov}
   \left(\partial_t{f_N} - Q(f_N,f_N),  \widehat{\bm T}_\bmk\right)_{\R^d} = 0 \quad \text{for all }\  \widehat{\bm T}_\bmk\in \widehat{\mathbb{T}}_N^d.
\ee 
By the orthogonality condition (\ref{ortho}),  we find that the coefficients $\{\wtf_\bmk(t)\}$ satisfy the following ODE system
\be
\begin{cases}
\f{\rm d}{{\rm d}t} \wtf_\bmk(t)
= \mQ_{\bm{k}}^N , \\
\wtf_\bmk(0) = 
\wtf_\bmk^0,
\end{cases}
\quad 0 \le \bmk \le N,
\label{spectral_ODE}
\ee
where 
\bea
   \mQ_{\bm{k}}^N 
   &:=&
   \left(Q (f_N, f_N), \widehat{\bm T}_\bmk\right)_{\R^d}
   \nn\\
   &=&
   \int_{\R^d} \int_{\R^d} \int_{S^{d-1}}
   \mathcal{B} (\bmv,\bmvs,{\bm \sg})
   f_N(\bmv) f_N(\bmvs) 
  \left[\widehat{\bm T}_\bmk(\bmv') - \widehat{\bm T}_\bmk(\bmv)\right] 
    \rd{\bmsg} \, \rd{\bmv} \, \rd{\bmvs},
   \label{Qk_component}
\eea
and
\be
\wtf_\bmk^0 := 
\left(f^0, \widehat{\bm T}_\bmk \right)_{\R^d}=\f{1}{\sqrt{\bm{c}_\bmk}} \left([{\bm \mu}({\bm \xi})]^{-4}f^0({\bmv}({\bm \xi})), {\bm T}_\bmk({\bm \xi})\right)_{\bm \og}.
\ee
Note that we used the weak form (\ref{weak}) in (\ref{Qk_component}).

\begin{remark}
An equivalent way of writing the ODE system (\ref{spectral_ODE}) is
\be
   \begin{cases}
    \partial_tf_N(t, \bmv) = 
    \pi_N^{4,d} Q (f_N, f_N), \\
     f_N(0, \bmv) = \pi_N^{4,d} f^0 (\bmv),
   \end{cases}
   \label{P_G_method}
\ee
where $\pi_N^{4,d}$ is the projection operator defined in (\ref{MCFs_projector}) (with $\alpha=4$). Indeed, for any $ f \in L_{\bm{\mu}^{-6}}^2 ({\mathbb R}^d)$,
\begin{equation}
\pi_N^{4,d} f= \sum_{0\le \bmk\le N} \left(f, \widehat{\bm T}_\bmk \right)_{\R^d} \widetilde{\bm T}_\bmk(\bmv) \in {\mathbb V}_N^{4,d}(\mathbb{R}^d)=\widetilde{\mathbb{T}}_N^d.
\end{equation}
\end{remark}

%%%%%%%%%%%%%%%%%%%%%%%%%%%%%%%%%%%%%%%%%%%%%%%%%%%
\subsection{Approximation property for the collision operator}

In this subsection, we establish a consistency result of the spectral approximation for the collision operator. We will show that if $f$ and $Q(f,f)$ have certain regularity, the proposed approximation of the collision operator $ \pi_N^{4,d} Q (\pi_N^{4,d}f, \pi_N^{4,d}f)$ enjoys spectral accuracy. We will only prove this result under the algebraic mapping (\ref{algebraic}) with $S=1$, that is in 1D,
\be
   v = \f{\xi}{\sqrt{1-\xi^2}},\quad
   \xi=\frac{v}{\sqrt{1+v^2}}, \quad
   \mu =\sqrt{1-\xi^2}= \f{1}{\sqrt{1+v^2}}.
\ee 
The reason of this choice is strongly motivated by the existing regularity result of the Boltzmann collision operator under an exponentially weighted Lebesgue norm:
\be \label{expLp}
   \|f\|_{\mL_{k}^p(\R^d)} =
   \left(\int_{\R^d} |f(\bmv)|^p (1+|\bmv|^2)^{kp/2}
   \, \rd \bmv\right)^{1/p}, \quad k \in \R, \ 1\leq p <\infty.
\ee 

Specifically, we write the collision operator (\ref{collison_operator}) as $Q (g,f) = Q^+ (g,f)-Q^- (g,f)$, where the gain part and loss part are given by
\begin{equation}
\begin{split}
 &Q^+(g,f)(\bmv) = 
   \int_{\R^d} \int_{S^{d-1}}
    \mathcal{B} (\bmv,\bmvs,{\bm \sg})
   g({\bm v'_*})f(\bmv')  \, \rd{\bmsg}  \, \rd{\bmvs},\\
& Q^-(g,f)(\bmv) = 
   \int_{\R^d} \int_{S^{d-1}}
    \mathcal{B} (\bmv,\bmvs,{\bm \sg})
   g({\bmvs})f(\bmv)\, \rd{\bmsg}  \, \rd{\bmvs}.
   \end{split}
\end{equation}
Then we have the following regularity result for the gain operator $Q^+ (g,f)$ established in \cite{mouhot2004regularity}.

\begin{theorem} [Theorem 2.1, \cite{mouhot2004regularity}] \label{theorem_gain} Let $k, \eta \in \mathbb{R}$, $1\leq p <\infty$, and let the collision kernel $\mathcal{B}$ satisfy certain cut-off assumption\footnote{To avoid technicality, we do not spell out the condition here and only mention that most of the collision kernels used in numerical simulations satisfy this assumption.}. Then the following estimate holds
\be
   \| Q^+ (g,f)\|_{\mL_{\eta}^p (\R^d)}
   \le 
   C_{k,\eta,p}(\mathcal{B}) 
   \|g\|_{\mL_{|k+\eta|+|\eta|}^1 (\R^d)} 
   \|f\|_{\mL_{k+\eta}^p (\R^d)}, 
   \label{general_gain_estimate}
\ee
where $C_{k,\eta,p}(\mathcal{B})$ is a constant that depends only on the kernel $\mathcal{B}$ and $k$, $\eta$ and $p$.
\end{theorem}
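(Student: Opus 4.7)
The plan is to reduce the weighted gain-operator estimate to a convolution-type bound to which Young's inequality applies, and then to transport the polynomial weight $\langle\bmv\rangle^\eta:=(1+|\bmv|^2)^{\eta/2}$ onto the post-collisional variables using the collisional conservation laws. First, I would recast $Q^+(g,f)$ in a Carleman-style representation, parametrizing the collision by the post-collisional velocities $(\bmv',{\bm v'_*})$ rather than $(\bmvs,{\bm \sg})$; this rewrites
\begin{equation*}
Q^+(g,f)(\bmv) \;=\; \int_{\R^d}\int_{\R^d} K(\bmv,\bmv',{\bm v'_*})\, g({\bm v'_*})\, f(\bmv')\,\rd{\bmv'}\,\rd{{\bm v'_*}},
\end{equation*}
where $K$ bundles the cut-off collision kernel with the Jacobian of the change of variables and is supported on the collisional manifold. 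Under the cut-off hypothesis on $\mathcal{B}$, $K$ has enough integrability in the relative variable $\bmv-\bmv'$ (equivalently $\bmv-{\bm v'_*}$) to behave like a locally integrable convolution kernel after freezing the orthogonal direction.

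The second step is to import the polynomial weight $\langle\bmv\rangle^\eta$. From the identities $\bmv+\bmvs=\bmv'+{\bm v'_*}$ and $|\bmv|^2+|\bmvs|^2=|\bmv'|^2+|{\bm v'_*}|^2$ one obtains the elementary pointwise bound $\langle\bmv\rangle\le \langle\bmv'\rangle\langle{\bm v'_*}\rangle$, which for $\eta\ge 0$ immediately splits the weight between $f$ and $g$. For $\eta<0$, or when rebalancing the shift parameter $k$, one instead uses the dual inequalities such as $\langle\bmv'\rangle\le\langle\bmv\rangle\langle\bmvs\rangle$ (from $|\bmv'|\le|\bmv|+|\bmvs|$), which let powers migrate across the collision at the price of charging extra polynomial factors onto $\bmvs$. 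Since $g$ is measured in $L^1$, these extra factors can be absorbed cheaply inside the $g$-norm. A case analysis in the signs of $\eta$ and $k+\eta$ then shows that the total weight transferred to $g$ is exactly $|k+\eta|+|\eta|$, while the residual weight on $f$ is $k+\eta$.

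In the third step, once the weights are redistributed the gain operator is bounded pointwise by a convolution of $G(\bmv):=g(\bmv)\langle\bmv\rangle^{|k+\eta|+|\eta|}$ and $F(\bmv):=f(\bmv)\langle\bmv\rangle^{k+\eta}$ against the kernel inherited from $K$ and controlled uniformly thanks to the cut-off. Young's inequality with exponents $(1,p,p)$, $\|G\ast F\|_{L^p}\le\|G\|_{L^1}\|F\|_{L^p}$, then delivers the claimed estimate, with $C_{k,\eta,p}(\mathcal{B})$ collecting the kernel bound. I expect the main obstacle to be the weight bookkeeping in step two: verifying that in every sign configuration of $\eta$ and $k+\eta$ the collisional inequalities shift the weight onto $g$ at total cost exactly $|k+\eta|+|\eta|$ without hidden losses, and in the right form for Young's inequality. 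The secondary technical point is showing that the Carleman-type kernel extracted in step one really does have the convolution integrability required by Young's inequality under the precise cut-off hypothesis on $\mathcal{B}$; this is where the unspecified cut-off assumption enters in an essential way.
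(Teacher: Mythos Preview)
The paper does not prove this theorem; it is quoted verbatim as Theorem~2.1 of \cite{mouhot2004regularity} and used as a black box to feed into Theorem~\ref{regularityQ} and the consistency analysis. So there is no ``paper's own proof'' to compare against.

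That said, your sketch is in the right spirit for how the result is obtained in the cited reference, with one notable deviation. The actual argument in \cite{mouhot2004regularity} does \emph{not} pass through a Carleman representation. Instead, it works directly in the $(\bmv,\bmvs,\bmsg)$ variables: one first uses the elementary inequality $\langle\bmv\rangle\le\langle\bmv'\rangle\langle{\bm v'_*}\rangle$ (and its variants, exactly as you describe in your second step) to push the weight $\langle\bmv\rangle^\eta$ onto the post-collisional densities, then performs the pre--post collisional change of variables $(\bmv,\bmvs,\bmsg)\mapsto(\bmv',{\bm v'_*},\bmsg')$, which has unit Jacobian and leaves the kernel $\mathcal{B}$ invariant. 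After this swap the gain term looks like a weighted loss term, and the cut-off assumption (a polynomial bound $\Phi(|z|)\lesssim\langle z\rangle^k$ on the kinetic part together with integrability of the angular part) lets one close by H\"older/Young in $\bmvs$. The Carleman route you propose can also be made to work, but it introduces an extra singular Jacobian supported on the collisional hyperplane, and verifying the convolution integrability you need in your third step under the stated cut-off hypothesis is more delicate than the direct pre--post swap. Your weight bookkeeping in step two is the correct heart of the matter and matches the reference; if you replace the Carleman representation by the simpler pre--post change of variables, the argument goes through cleanly.
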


To obtain a similar estimate for the loss operator $Q^-(g,f)$, we restrict ourselves to the variable hard sphere (VHS) collision model \cite{bird1994molecular}. Note that this kernel falls into the assumption in Theorem~\ref{theorem_gain}. We have the following result.

\begin{proposition}
Let $\eta \in \mathbb{R}$, $1\leq p <\infty$, and let the collision kernel takes the form $\mathcal{B} = C_{\ld} |\bmv-\bmvs|^{\ld}$, where $0\le \ld \le 1$ and $C_{\ld}$ is a positive constant. Then the following estimate holds
\be 
   \|Q^- (g,f) \|_{\mL_{\eta}^p (\R^d)}
   \le
   C_{\lambda}
   \|g\|_{\mL_{\ld}^1 (\R^d)}  \|f\|_{L_{\ld+\eta}^p (\R^d)}. \label{loss term}
\ee
\end{proposition}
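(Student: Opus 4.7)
The plan is to exploit the fact that, unlike the gain operator, the loss operator factorizes pointwise into $f(\bmv)$ times a convolution-type integral involving only $g$. This turns the estimate into an almost direct application of weighted $L^p$ calculus.

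First, I would simplify $Q^-(g,f)$. Because the VHS kernel $\mathcal{B} = C_\ld |\bmv-\bmvs|^\ld$ is independent of $\bmsg$ and the loss integrand contains no post-collisional velocity, the $\bmsg$-integration yields a factor $|S^{d-1}|$, giving
\be
Q^-(g,f)(\bmv) = C_\ld |S^{d-1}|\, f(\bmv) \int_{\R^d} |\bmv-\bmvs|^\ld g(\bmvs)\,\rd\bmvs. \nn
\ee
Thus the whole estimate reduces to controlling the weighted $L^p$ norm of $f(\bmv)\, I_g(\bmv)$, where $I_g(\bmv):=\int_{\R^d}|\bmv-\bmvs|^\ld g(\bmvs)\,\rd\bmvs$.

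The core observation is the elementary inequality
\be
|\bmv-\bmvs|^\ld \le (1+|\bmv|^2)^{\ld/2}(1+|\bmvs|^2)^{\ld/2}, \qquad 0\le \ld \le 2, \nn
\ee
which for $\ld=2$ reduces to $1+|\bmv|^2|\bmvs|^2+2\bmv\cdot\bmvs \ge 0$ and extends to $0\le\ld\le 2$ by concavity of $t\mapsto t^{\ld/2}$. Applying this to $I_g(\bmv)$ pulls the $\bmv$-dependent factor outside the integral:
\be
I_g(\bmv) \le (1+|\bmv|^2)^{\ld/2}\, \|g\|_{\mL^1_\ld(\R^d)}. \nn
\ee

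Combining the two bounds gives the pointwise estimate
\be
|Q^-(g,f)(\bmv)| \le C_\ld|S^{d-1}|\, \|g\|_{\mL^1_\ld(\R^d)}\, |f(\bmv)|\,(1+|\bmv|^2)^{\ld/2}. \nn
\ee
Raising to the $p$-th power, multiplying by the weight $(1+|\bmv|^2)^{\eta p/2}$ from the definition (\ref{expLp}), integrating in $\bmv$, and recognizing $(1+|\bmv|^2)^{(\ld+\eta)p/2}|f(\bmv)|^p$ as the integrand of $\|f\|_{\mL^p_{\ld+\eta}(\R^d)}^p$ yields (\ref{loss term}) after taking $p$-th roots, with $C_\ld$ absorbing $C_\ld|S^{d-1}|$. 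There is no serious obstacle here; the only subtlety is verifying the weighted inequality for $|\bmv-\bmvs|^\ld$, which is the one step worth writing out carefully.
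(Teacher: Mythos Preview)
Your proof is correct and follows essentially the same route as the paper: both establish the pointwise bound $|\bmv-\bmvs|^\ld \le (1+|\bmv|^2)^{\ld/2}(1+|\bmvs|^2)^{\ld/2}$ (the paper via $|\bmv-\bmvs|\le |\bmv|+|\bmvs|\le (1+|\bmv|^2)^{1/2}(1+|\bmvs|^2)^{1/2}$, you via the $\ld=2$ case and powers) and then insert it into the weighted $L^p$ norm. One minor remark: your extension from $\ld=2$ to $0\le\ld\le2$ uses monotonicity of $t\mapsto t^{\ld/2}$ on $[0,\infty)$, not concavity.
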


\begin{proof}
Note that 
\be
   |\bmv-\bmvs| 
   \le |\bmv| + |\bmvs|
   = (|\bmv|^2 + |\bmvs|^2 + 2|\bmv||\bm{v_*}|)^{1/2}
   \le \left(1+|\bmv|^2\right)^{1/2} \left(1+|\bmvs|^2\right)^{1/2}. \nn
\ee 
Then
\bea
   Q^- (g,f) (\bmv)
   &=& 
   C_{\ld} f(\bmv)
   \int_{\R^d} g(\bmvs) |\bm{v-v_*}|^{\ld}
   \, {\rm d} \bmvs
   \nn \\ 
   &\le&
   C_{\ld} f(\bmv) \left(1+|\bmv|^2\right)^{\ld/2}
   \left[\int_{\R^d} g(\bmv_*)\left(1+|\bmvs|^2\right)^{\ld/2}
   \, {\rm d} \bmvs\right]
   \nn\\
   &=&
   C_{\ld}
   f(\bmv) \left(1+|\bmv|^2\right)^{\ld/2} \|g\|_{L_{\ld}^1(\R^d) }. \nn
\eea
Therefore, for any $\eta \in \R$, $1\leq p <\infty$,
\be 
   \|Q^- (g,f) \|_{\mL_{\eta}^p (\R^d)}
   \le
   C_{\lambda}
   \|g\|_{\mL_{\ld}^1 (\R^d)}  \|f\|_{L_{\ld+\eta}^p (\R^d)}.\nn
\ee
\end{proof}

Combining the previous two results, we can obtain the following theorem.

\begin{theorem} \label{regularityQ}
Let the collision kernel takes the form $\mathcal{B} = C_{\ld} |\bmv-\bmvs|^{\ld}$, where $0\le \ld \le 1$ and $C_{\ld}$ is a positive constant. Then the collision operator $Q(g,f)$ satisfies
\be 
   \| Q (g,f)\|_{\mL_{3d}^2 (\R^d)}
   \le 
   C_d(\mathcal{B})
   \|g\|_{\mL_{\ld+6d}^1 (\R^d)}  \|f\|_{\mL_{\ld+3d}^2 (\R^d)},
\ee
where $C_d(\mathcal{B})$ is a constant that depends only on the kernel $\mathcal{B}$ and the dimension $d$.
\end{theorem}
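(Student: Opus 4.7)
The plan is to prove this theorem by a clean decomposition argument that simply glues together the two estimates just proved. Since $Q(g,f) = Q^+(g,f) - Q^-(g,f)$, I would apply the triangle inequality in $\mL_{3d}^2(\R^d)$ and bound each piece separately, then combine.

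For the gain part, I would apply Theorem 3.1 (the Mouhot--Villani estimate from \cite{mouhot2004regularity} stated here) with the parameter choices $p=2$, $\eta = 3d$, and $k = \lambda$. Substituting into (\ref{general_gain_estimate}) yields a bound with $\|f\|_{\mL_{k+\eta}^p} = \|f\|_{\mL_{\ld+3d}^2}$, which is exactly the $L^2$-weight that appears on the right-hand side. The $L^1$ weight on $g$ becomes $|k+\eta| + |\eta| = |\lambda+3d| + |3d|$; since $0 \le \lambda \le 1$ and $d \ge 2$, both quantities are nonnegative, so this simplifies to $\lambda + 6d$. Hence
\be
\|Q^+(g,f)\|_{\mL_{3d}^2(\R^d)} \le C_{\lambda,d,p}(\mathcal{B}) \, \|g\|_{\mL_{\lambda+6d}^1(\R^d)} \, \|f\|_{\mL_{\lambda+3d}^2(\R^d)}. \nn
\ee

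For the loss part, I would apply the preceding proposition with $p=2$ and $\eta = 3d$, giving
\be
\|Q^-(g,f)\|_{\mL_{3d}^2(\R^d)} \le C_\lambda \, \|g\|_{\mL_\lambda^1(\R^d)} \, \|f\|_{\mL_{\lambda+3d}^2(\R^d)}. \nn
\ee
To match the $L^1$ weight on $g$ with the one on the gain side, I would use the elementary monotonicity $(1+|\bmv|^2)^{\lambda/2} \le (1+|\bmv|^2)^{(\lambda+6d)/2}$, which holds pointwise since the base is at least one. This upgrades the $\mL_\lambda^1$ norm to $\mL_{\lambda+6d}^1$ at the cost of an inequality.

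Combining the two bounds via the triangle inequality and absorbing the two constants into a single $C_d(\mathcal{B})$ delivers the stated estimate. The only minor technical point is verifying that the VHS kernel indeed satisfies the cut-off hypothesis required by Theorem 3.1, which is standard and noted in the discussion preceding the proposition; aside from this, the argument is a direct bookkeeping exercise in selecting the exponents $(k,\eta,p)$ so that both subestimates share the same pair of right-hand side norms.
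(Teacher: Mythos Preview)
Your proposal is correct and matches the paper's proof essentially line for line: both apply Theorem~\ref{theorem_gain} with $k=\lambda$, $\eta=3d$, $p=2$ for $Q^+$, apply the proposition with $\eta=3d$, $p=2$ for $Q^-$, and then combine. You make the monotonicity step $\|g\|_{\mL_\lambda^1}\le \|g\|_{\mL_{\lambda+6d}^1}$ explicit, which the paper leaves implicit in the phrase ``Combining both''; otherwise the arguments are identical.
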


\begin{proof}
Choosing $k=\lambda$, $\eta=3d$, $p=2$ in (\ref{general_gain_estimate}), we have
\be
   \| Q^+ (g,f)\|_{\mL_{3d}^2 (\R^d)}
   \le 
C_{\lambda,3d,2}(\mathcal{B})
   \|g\|_{\mL_{\lambda+6d}^1 (\R^d)} 
   \|f\|_{\mL_{\lambda+3d}^2 (\R^d)}. \nn
\ee
Choosing $\eta=3d$, $p=2$ in (\ref{loss term}), we have
\be 
   \|Q^- (g,f) \|_{\mL_{3d}^2 (\R^d)}
   \le
C_{\lambda}
   \|g\|_{\mL_{\ld}^1 (\R^d)}  \|f\|_{\mL_{\ld+3d}^2 (\R^d)}. \nn
\ee
Combining both, we obtain
\be
   \| Q (g,f)\|_{\mL_{3d}^2 (\R^d)}
   \le 
   C_d(\mathcal{B})
   \|g\|_{\mL_{\ld+6d}^1 (\R^d)}  \|f\|_{\mL_{\ld+3d}^2 (\R^d)}.\nn
\ee
\end{proof}

Before we proceed to the consistency proof, we need the following lemmas.
\begin{lemma}
\label{norm_inequality}
   Under the algebraic mapping (\ref{algebraic}) with $S=1$, we have
   \be
     \| f \|_{\mL_{\eta}^2(\R^d) }
      \le \| f \|_{L_{\bm{\mu}^{-2\eta}}^2(\R^d) } \le \| f \|_{\mL_{d\eta}^2(\R^d) } \quad \text{for any } \eta\geq 0.
   \ee 
\end{lemma}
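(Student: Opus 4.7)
The plan is to unravel the weight $\bm{\mu}^{-2\eta}$ under the algebraic mapping with $S=1$, reduce the lemma to two pointwise inequalities between scalar weights on $\mathbb{R}^d$, and then integrate against $|f|^2$. Since $\mu(\xi(v_j)) = (1+v_j^2)^{-1/2}$, by the tensor product definition we have
\begin{equation*}
\bm{\mu}^{-2\eta}(\bm{v}) = \prod_{j=1}^d \mu(\xi(v_j))^{-2\eta} = \prod_{j=1}^d (1+v_j^2)^{\eta}.
\end{equation*}
Thus the desired chain of norm inequalities is equivalent to the pointwise statement
\begin{equation*}
(1+|\bm{v}|^2)^{\eta} \;\le\; \prod_{j=1}^d (1+v_j^2)^{\eta} \;\le\; (1+|\bm{v}|^2)^{d\eta}, \qquad \eta \ge 0.
\end{equation*}

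For the left inequality I would expand the product: since all terms $v_j^2$ are nonnegative,
\begin{equation*}
\prod_{j=1}^d (1+v_j^2) = 1 + \sum_{j=1}^d v_j^2 + (\text{nonnegative cross terms}) \;\ge\; 1 + |\bm{v}|^2,
\end{equation*}
and raising both sides to the nonnegative power $\eta$ preserves the inequality. For the right inequality, note that $1+v_j^2 \le 1 + |\bm{v}|^2$ for each $j$, so taking the product over $j=1,\ldots,d$ yields $\prod_j (1+v_j^2) \le (1+|\bm{v}|^2)^d$, and again raising to the power $\eta \ge 0$ gives the desired bound.

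Finally, multiplying each of these pointwise inequalities by $|f(\bm{v})|^2$ and integrating over $\mathbb{R}^d$ directly yields
\begin{equation*}
\|f\|_{\mathcal{L}^2_\eta(\mathbb{R}^d)}^2 \;\le\; \|f\|_{L^2_{\bm{\mu}^{-2\eta}}(\mathbb{R}^d)}^2 \;\le\; \|f\|_{\mathcal{L}^2_{d\eta}(\mathbb{R}^d)}^2,
\end{equation*}
after which taking square roots completes the proof. There is no real obstacle here; the only point worth double-checking is the hypothesis $\eta \ge 0$, which is essential because raising a pointwise inequality $A \le B$ to a negative power would reverse it, and this is precisely why the lemma is stated with the nonnegativity assumption.
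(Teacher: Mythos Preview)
Your proof is correct and follows essentially the same approach as the paper: both reduce the norm inequalities to the pointwise estimate $1+|\bm v|^2 \le \prod_{j=1}^d (1+v_j^2) \le (1+|\bm v|^2)^d$ (using expansion of the product for the lower bound and $1+v_j^2 \le 1+|\bm v|^2$ for the upper bound), then integrate against $|f|^2$. Your version is slightly more explicit in justifying the pointwise bounds and in noting where $\eta \ge 0$ is used, but the argument is the same.
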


\begin{proof}
Note that 
\begin{equation}
1+\sum_{j=1}^d |v_j|^2  \leq \prod_{j=1}^d (1+|v_j|^2)\leq (1+\sum_{j=1}^d |v_j|^2)^d. \nn
\end{equation}
Then we have
\bea
   \|f\|_{L_{\bm{\mu}^{-2\eta}}^2 (\R^d)} 
   &=& 
   \left(\int_{\R^d} |f(\bmv)|^2 \bm{\mu}^{-2\eta} 
   \, \rd \bmv\right)^{1/2}
   \nn \\
   &=& 
   \left(\int_{\R^d} |f(\bmv)|^2 \prod_{j=1}^d (1+|v_j|^2)^{\eta} 
   \ \rd \bmv\right)^{1/2} 
   \nn\\
   &\ge &
   \left(\int_{\R^d} |f(\bmv)|^2 \left(1+\sum_{j=1}^d |v_j|^2\right)^{\eta} 
   \, \rd \bmv\right)^{1/2}
   = \|f\|_{\mL_{\eta}^2 (\R^d)}. \nn
\eea 
Also,
\bea
   \|f\|_{L_{\bm{\mu}^{-2\eta}}^2 (\R^d)} 
   &=& 
   \left(\int_{\R^d} |f(\bmv)|^2 \prod_{j=1}^d (1+|v_j|^2)^{\eta} 
   \, \rd \bmv\right)^{1/2} 
   \nn\\
  &\le&   \left(\int_{\R^d} |f(\bmv)|^2 \left(1+\sum_{j=1}^d |v_j|^2\right)^{d\eta} 
   \, \rd \bmv\right)^{1/2}
   = \|f\|_{\mL_{d\eta}^2 (\R^d)}. \nn
   \eea
\end{proof}

\begin{lemma}
\label{norm_inequality_1}
   For any $\eta \geq 0$, there exist $\epsilon>0$ and $C_{\epsilon}>0$ such that
   \be
     \|f\|_{\mL_{\eta}^1 (\R^d)}
      \le C_{\epsilon}\|f\|_{\mL_{\eta+\frac{1+\epsilon}{2}}^2 (\R^d)}.
   \ee 
\end{lemma}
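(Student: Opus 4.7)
The plan is to reduce the inequality to a straightforward application of the Cauchy--Schwarz inequality, splitting the weight $(1+|\bmv|^2)^{\eta/2}$ appearing in the $\mL_\eta^1$-norm into two factors: one that, when combined with $|f|$, matches the weight in the $\mL_{\eta+(1+\epsilon)/2}^2$-norm, and a residual weight whose square must be integrable over $\R^d$.

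First, for any parameter $s>0$ to be fixed later, I would write
\[
  |f(\bmv)|(1+|\bmv|^2)^{\eta/2} = \Bigl[|f(\bmv)|(1+|\bmv|^2)^{\eta/2+s/2}\Bigr] \cdot (1+|\bmv|^2)^{-s/2}.
\]
Then Cauchy--Schwarz in $L^2(\R^d)$ yields
\[
  \|f\|_{\mL_\eta^1(\R^d)} \le \left(\int_{\R^d} |f(\bmv)|^2 (1+|\bmv|^2)^{\eta+s}\,\rd\bmv\right)^{1/2} \left(\int_{\R^d}(1+|\bmv|^2)^{-s}\,\rd\bmv\right)^{1/2}.
\]
Choosing $s=\tfrac{1+\epsilon}{2}$ makes the first factor equal to $\|f\|_{\mL_{\eta+(1+\epsilon)/2}^2(\R^d)}$, so it only remains to ensure the second factor is finite.

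Passing to spherical coordinates, the second integral behaves at infinity like $\int^\infty r^{d-1-(1+\epsilon)}\,\rd r$, which converges if and only if $\epsilon > d-1$. Since the lemma only requires the \emph{existence} of some $\epsilon>0$, and $d\in\{2,3\}$ is finite, I would simply pick any $\epsilon > d-1$ (for instance $\epsilon=d$) and define
\[
  C_\epsilon := \left(\int_{\R^d}(1+|\bmv|^2)^{-(1+\epsilon)/2}\,\rd\bmv\right)^{1/2} < \infty,
\]
which closes the argument.

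There is essentially no obstacle here beyond verifying the correct exponent splitting; the only quantitative point to keep track of is the dimension-dependent convergence threshold $\epsilon>d-1$, which is compatible with the qualitative statement ``there exist $\epsilon>0$.''
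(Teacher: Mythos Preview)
Your proposal is correct and is essentially identical to the paper's proof: both split the weight and apply Cauchy--Schwarz, leaving the same residual integral $\int_{\R^d}(1+|\bmv|^2)^{-(1+\epsilon)/2}\,\rd\bmv$ to be bounded. The only difference is that you explicitly identify the convergence threshold $\epsilon>d-1$, whereas the paper simply asserts the integral is finite and absorbs it into $C_\epsilon$; your version is slightly more careful on this point but otherwise the same argument.
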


\begin{proof}
Note that
\bea
  \|f\|^2_{\mL_{\eta}^1 (\R^d)}&=& \left(\int_{\mathbb{R}^d} |f(\bmv)| (1+|\bmv|^2)^{\frac\eta 2}\, \rd{\bmv}\right)^2 \nn\\
  &\le & \int_{\mathbb{R}^d} |f(\bmv)|^2 (1+|\bmv|^2)^{\eta+\frac{1+\epsilon}{2}}\, \rd{\bmv}\,
   \int_{\mathbb{R}^d}  (1+|\bmv|^2)^{-\frac{1+\epsilon}{2}} \,\rd{\bmv} \nn \\
   &\le& C_\epsilon  \int_{\mathbb{R}^d} |f(\bmv)|^2 (1+|\bmv|^2)^{\eta+\frac{1+\epsilon}{2}} \,\rd{\bmv} \nn\\
   &=& C_{\epsilon}\|f\|^2_{\mL_{\eta+\frac{1+\epsilon}{2}}^2 (\R^d)}, \nn
\eea
where we used the Cauchy-Schwarz inequality.
\end{proof}

We are ready to present a consistency result. 

\begin{theorem} \label{consistency}
Let the collision kernel takes the form $\mathcal{B} = C_{\ld} |\bmv-\bmvs|^{\ld}$, where $0\le \ld \le 1$ and $C_{\ld}$ is a positive constant. Then under the algebraic mapping (\ref{algebraic}) with $S=1$, we have
\begin{equation}
\begin{split}
&  \left\|Q (f,f) - \pi^{4,d}_N Q (\pi_N^{4,d} f,\pi_N^{4,d} f) \right\|_{\mL_{3}^2 (\R^d)}\\
 \le &   C_{d,\epsilon}(\mathcal{B}) N^{-m}\left(  \big|f\big|_{{\bf B}^m_{\lambda+6d+\frac{3+\epsilon}{2}} (\R^d)} \|f\|_{\mL_{\lambda+3d}^2 (\R^d)}
   + \big|f\big|_{{\bf B}^m_{\lambda+3d+1} (\R^d)} \|f\|_{\mL^1_{\lambda+6d} (\R^d)}+ \big|Q ( f, f)\big|_{{\bm B}_4^m (\R^d)}\right),
\end{split}
\end{equation}
where $m$ is a positive integer, $d$ is the dimension, $\epsilon>0$ is a constant, and $C_{d,\epsilon}(\mathcal{B})$ is a constant depending only on the kernel $\mathcal{B}$, $d$ and $\epsilon$.
\end{theorem}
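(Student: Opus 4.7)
The plan is to split the consistency error by the triangle inequality as
$$
Q(f,f) - \pi_N^{4,d} Q(\pi_N^{4,d}f, \pi_N^{4,d}f) = \bigl[Q(f,f) - \pi_N^{4,d} Q(f,f)\bigr] + \pi_N^{4,d}\bigl[Q(f,f) - Q(\pi_N^{4,d}f, \pi_N^{4,d}f)\bigr],
$$
and bound each piece in the target norm $\mL_3^2(\R^d)$. The first (projection of $Q(f,f)$) is the easy part: Lemma~\ref{norm_inequality} with $\eta=3$ upgrades the target norm to $L^2_{\bm{\mu}^{-6}}(\R^d)$, and Theorem~\ref{Md_OMCF_approx} at $\ap=4$ yields the bound $C N^{-m}\,|Q(f,f)|_{\bm{B}_4^m(\R^d)}$, which is the third term on the right-hand side of the statement.

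For the second (crossed) term, I use that $\pi_N^{4,d}$ is an orthogonal projection in $L^2_{\bm{\mu}^{-6}}$ and hence a contraction in that norm; combined with Lemma~\ref{norm_inequality}, it suffices to bound $\|Q(f,f) - Q(\pi_N^{4,d}f, \pi_N^{4,d}f)\|_{\mL_{3d}^2}$. Bilinearity of $Q$ splits this as $Q(f - \pi_N^{4,d}f, f) + Q(\pi_N^{4,d}f,\, f - \pi_N^{4,d}f)$, and Theorem~\ref{regularityQ} converts each piece into a product of an $\mL^1_{\ld+6d}$ norm and an $\mL^2_{\ld+3d}$ norm. Lemma~\ref{norm_inequality_1} then trades the $\mL^1$ projection error for an $\mL^2$ one with an extra weight power $(1+\ep)/2$, producing the index $\ld+6d+(1+\ep)/2$; the factor $\|\pi_N^{4,d}f\|_{\mL^1_{\ld+6d}}$ is in turn controlled by $\|f\|_{\mL^1_{\ld+6d}} + \|f - \pi_N^{4,d}f\|_{\mL^1_{\ld+6d}}$, whose first summand produces the second term on the RHS while the second is higher order in $N^{-m}$ and can be absorbed.

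The main technical obstacle is estimating $\|f - \pi_N^{4,d}f\|_{L^2_{\bm{\mu}^{-2\eta}}(\R^d)}$ for $\eta > 3$, since Theorem~\ref{Md_OMCF_approx} at $\ap=4$ only covers the $\eta=3$ case directly. I resolve this by pushing the error onto the reference cube: using $\pi_N^{4,d}f = \bm{\mu}^4 \Pi_N^d(F\bm{\mu}^{-4})$ together with the Jacobian ${\rm d}\bmv = \bm{\omega}\bm{\mu}^{-2}\,{\rm d}\bmxi$, the left-hand side becomes
$$
\int_{I^d} \bigl|F\bm{\mu}^{-4} - \Pi_N^d(F\bm{\mu}^{-4})\bigr|^2\, \bm{\omega}\, \bm{\mu}^{6-2\eta}\, {\rm d}\bmxi.
$$
This is a weighted multi-variate Chebyshev approximation error in the weight $\bm{\omega}\, \bm{\mu}^{6-2\eta}$; running the same argument as in the proof of Theorem~\ref{Md_OMCF_approx} (built on the multi-variate approximation result from \cite{shen2010sparse}) and matching exponents via $\ap' = \eta+1$ yields an $N^{-m}$ bound by the seminorm $|f|_{\bm{B}^m_{\eta+1}(\R^d)}$. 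Substituting $\eta = \ld + 3d$ and $\eta = \ld + 6d + (1+\ep)/2$ produces exactly the seminorms $|f|_{\bm{B}^m_{\ld+3d+1}}$ and $|f|_{\bm{B}^m_{\ld+6d+(3+\ep)/2}}$ appearing in the statement. Collecting the two pieces and absorbing constants then yields the stated estimate.
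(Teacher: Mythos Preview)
Your proposal follows the paper's proof essentially step for step: the same triangle-inequality split, the same use of the projection contraction in $L^2_{\bm{\mu}^{-6}}$ combined with Lemma~\ref{norm_inequality} to pass to $\mL_{3d}^2$, the same bilinear decomposition of $Q$ handled via Theorem~\ref{regularityQ}, and the same use of Lemma~\ref{norm_inequality_1} followed by the projection-error bound. The one place you are more explicit than the paper is in flagging that bounding $\|f - \pi_N^{4,d}f\|_{L^2_{\bm{\mu}^{-2\eta}}}$ for $\eta>3$ is not literally the $\alpha=4$ case of Theorem~\ref{Md_OMCF_approx}; the paper applies that step without comment, and your resolution via the reference-cube computation with exponent $\alpha'=\eta+1$ is precisely the implicit extension being used.
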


\begin{proof}

By the triangle inequality
\bea
   && \|Q (f,f) - \pi^{4,d}_N Q (\pi_N^{4,d} f,\pi_N^{4,d} f) \|_{\mL_{3}^2 (\R^d)}
   \nn\\
   &\le &
   \|Q (f,f) - \pi^{4,d}_N Q (f, f)\|_{\mL_{3}^2 (\R^d)} +
   \|\pi^{4,d}_N Q (f, f)- \pi^{4,d}_N Q (\pi_N^{4,d} f,\pi_N^{4,d} f) \|_{\mL_{3}^2 (\R^d)}.
   \label{consistency_triangle}\nn
\eea

For the first term, by Lemma~\ref{norm_inequality} and Theorem~\ref{Md_OMCF_approx}, we have
\bea
     \|Q (f,f) - \pi^{4,d}_N Q (f, f)\|_{\mL_{3}^2 (\R^d)} \le    \big\|Q (f,f) - \pi^{4,d}_N Q (f, f) \big\|_{L_{\bm{\mu}^{-6}}^2 (\R^d)} \le 
       C N^{-m} 
   \big|Q ( f, f)\big|_{{\bm B}_4^m (\R^d)}.
   \label{Consistency_part2}\nn
\eea

For the second term, using again Lemma~\ref{norm_inequality}, Theorem~\ref{Md_OMCF_approx}, and Lemma~\ref{norm_inequality_1}, Theorem~\ref{regularityQ}, we have
\begin{equation}
\begin{split}
 & \|\pi^{4,d}_N Q (f, f)- \pi^{4,d}_N Q (\pi_N^{4,d} f,\pi_N^{4,d} f)\|_{\mL_{3}^2 (\R^d)} \leq \|\pi^{4,d}_N Q (f, f)- \pi^{4,d}_N Q (\pi_N^{4,d} f,\pi_N^{4,d} f)\|_{L_{\bmmu^{-6}}^2 (\R^d)}
   \\
   \le&
     \| Q (f, f)-  Q (\pi_N^{4,d} f,\pi_N^{4,d} f)\|_{L_{\bmmu^{-6}}^2 (\R^d)}  \leq  \|Q (f, f)- Q (\pi_N^{4,d} f,\pi_N^{4,d} f)\|_{\mL_{3d}^2 (\R^d)} \\
   \leq &
   \|Q (f-\pi_N^{4,d} f,f) \|_{\mL_{3d}^2 (\R^d)}
   +
   \|Q (\pi_N^{4,d} f,f-\pi_N^{4,d} f)\|_{\mL_{3d}^2 (\R^d)}
   \nn\\
 \le &
   C_d(\mathcal{B}) 
   \left(
      \|f-\pi_N^{4,d} f\|_{\mL_{\lambda+6d}^1 (\R^d)}   
   \|f\|_{\mL_{\lambda+3d}^2 (\R^d)}
   +
   \|\pi_N^{4,d} f\|_{\mL_{\lambda+6d}^1 (\R^d)}   
   \|f-\pi_N^{4,d} f\|_{\mL_{\lambda+3d}^2 (\R^d)} \right) \nn\\
 \le &
  C_d(\mathcal{B}) 
   \left( \|f-\pi_N^{4,d} f\|_{\mL_{\lambda+6d}^1 (\R^d)}
         (\|f\|_{\mL_{\lambda+3d}^2 (\R^d)}
   +\|f-\pi_N^{4,d} f\|_{\mL_{\lambda+3d}^2 (\R^d)})+
   \| f\|_{\mL_{\lambda+6d}^1 (\R^d)}   
   \|f-\pi_N^{4,d} f\|_{\mL_{\lambda+3d}^2 (\R^d)} \right) \nn\\
 \le &
   C_{d,\epsilon}(\mathcal{B})   \|f-\pi_N^{4,d} f\|_{\mL_{\lambda+6d+\frac{1+\epsilon}{2}}^2 (\R^d)}\left( \|f\|_{\mL_{\lambda+3d}^2 (\R^d)}+\|f-\pi_N^{4,d} f\|_{\mL_{\lambda+3d}^2 (\R^d)}\right)
  \\
  & + C_d(\mathcal{B})\|f\|_{\mL^1_{\lambda+6d} (\R^d)}  \|f-\pi_N^{4,d} f\|_{\mL_{\lambda+3d}^2 (\R^d)} \\
 \le &
   C_{d,\epsilon}(\mathcal{B})  \|f-\pi_N^{4,d} f\|_{L_{{\bm{\mu}}^{-2(\lambda+6d+\frac{1+\epsilon}{2})}}^2 (\R^d)}( \|f\|_{\mL_{\lambda+3d}^2 (\R^d)}+\|f-\pi_N^{4,d} f\|_{L_{{\bm{\mu}}^{-2(\lambda+3d)}}^2(\R^d)})\\
  & + C_d(\mathcal{B})\|f\|_{\mL^1_{\lambda+6d} (\R^d)} \|f-\pi_N^{4,d} f\|_{L_{{\bm{\mu}}^{-2(\lambda+3d)}}^2(\R^d)} \\
\le &
   C_{d,\epsilon}(\mathcal{B}) N^{-m} \big|f\big|_{{\bf B}^m_{\lambda+6d+\frac{3+\epsilon}{2}} (\R^d)}\left(   \|f\|_{\mL_{\lambda+3d}^2 (\R^d)}
   + CN^{-m}  \big|f\big|_{{\bf B}^m_{\lambda+3d+1} (\R^d)} \right)\\
   & + C_d(\mathcal{B})N^{-m}\|f\|_{\mL^1_{\lambda+6d} (\R^d)}\big|f\big|_{{\bf B}^m_{\lambda+3d+1} (\R^d)} \\
 \le & C_{d,\epsilon}(\mathcal{B}) N^{-m} \left(\big|f\big|_{{\bf B}^m_{\lambda+6d+\frac{3+\epsilon}{2}} (\R^d)} \|f\|_{\mL_{\lambda+3d}^2 (\R^d)}+\big|f\big|_{{\bf B}^m_{\lambda+3d+1} (\R^d)}  \|f\|_{\mL^1_{\lambda+6d} (\R^d)} \right).
 \end{split}
 \end{equation}
 Combining the above inequalities, we arrive at the desired result.
  \end{proof}

\subsection{Approximation property for the moments}

For the Boltzmann equation, the moments or macroscopic observables are important physical quantities. Still, under the algebraic mapping (\ref{algebraic}), we can show that the spectral method (\ref{spectral_ODE}) preserves mass and energy.
\begin{theorem}
   If using the algebraic mapping (\ref{algebraic}) with $N\geq 2$, the spectral method (\ref{spectral_ODE}) preserves mass and energy, i.e., $\rho(t)$ and $E(t)$ defined by
   \bea \label{def_mom}
     \rho(t):= \int_{\R^d} f_N(t,\bmv) \, \rd \bmv, \quad E(t)=\int_{\R^d}  f_N(t,\bmv) |\bmv|^2\, \rd \bmv
   \eea
 remain constant in time. Furthermore,
 \begin{equation}
 \rho(t)\equiv \int_{\R^d} f^0(\bmv) \, \rd \bmv, \quad E(t)\equiv \int_{\R^d} f^0(\bmv)|\bmv|^2\,\rd{\bmv}.
 \end{equation}
\end{theorem}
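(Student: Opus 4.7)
The approach is to reduce both conservation statements to a single structural observation: under the algebraic mapping with $S=1$, both collision invariants $1$ and $|\bmv|^2$ lie in the test function space $\widehat{\mathbb{T}}_N^d$ as soon as $N\ge 2$. Once this is established, time-invariance follows from the Petrov--Galerkin equation (\ref{petrov}) combined with the collisional conservation identities (\ref{consv}) applied to $Q(f_N,f_N)$, while the match with the initial data follows from the bi-orthogonality characterization (\ref{projection_verified}) of $\pi_N^{4,d}$.

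The main computational step is the explicit evaluation of the 1D test functions at $k=0,2$. Under the algebraic mapping with $S=1$ one has $\mu^{-2}=1+v^2$ and $\xi^2 = v^2/(1+v^2)$, so
\[
\widehat{T}_0(v)=\frac{1+v^2}{\sqrt{\pi}}, \qquad \widehat{T}_2(v)=\frac{(1+v^2)(2\xi^2-1)}{\sqrt{\pi/2}} = \frac{v^2-1}{\sqrt{\pi/2}}.
\]
Solving this $2\times 2$ linear system yields $1 = \tfrac{1}{2}\bigl(\sqrt{\pi}\,\widehat{T}_0 - \sqrt{\pi/2}\,\widehat{T}_2\bigr)$ and $v^2 = \tfrac{1}{2}\bigl(\sqrt{\pi}\,\widehat{T}_0 + \sqrt{\pi/2}\,\widehat{T}_2\bigr)$. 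Tensorizing in each coordinate, the constant function $1=\prod_j 1$ and each monomial $v_j^2$ (and hence $|\bmv|^2=\sum_j v_j^2$) are exhibited as linear combinations of $\widehat{\bm T}_\bmk$ with each $k_i\in\{0,2\}$; these belong to $\widehat{\mathbb{T}}_N^d$ precisely when $N\ge 2$.

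With this in hand, for any $\phi\in\widehat{\mathbb{T}}_N^d$ we may test (\ref{petrov}) against $\phi$ (expanded in the $\widehat{\bm T}_\bmk$'s) to obtain $\frac{d}{dt}(f_N,\phi)_{\R^d} = (Q(f_N,f_N),\phi)_{\R^d}$. Choosing $\phi=1$ and $\phi=|\bmv|^2$ and invoking the collision invariants (\ref{consv}), which hold for any admissible density including $f_N$, yields $\frac{d}{dt}\rho(t) = \frac{d}{dt}E(t) = 0$. Finally, for the identification of the conserved values, we use that $f_N(0,\bmv)=\pi_N^{4,d} f^0$ satisfies $(\pi_N^{4,d}f^0 - f^0, \widehat{\bm T}_\bmk)_{\R^d} = 0$ for all $0\le\bmk\le N$ (a direct consequence of (\ref{projection_verified}) after rewriting $\bmmu^{2-2\alpha}\bm{T}_\bmk^\alpha = \widehat{\bm T}_\bmk$ at $\alpha=4$, or simply from the expansion $\pi_N^{4,d}f^0 = \sum_\bmk (f^0,\widehat{\bm T}_\bmk)_{\R^d}\widetilde{\bm T}_\bmk$ together with (\ref{ortho})). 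Applying this with $\phi=1$ and $\phi=|\bmv|^2$ gives $\rho(0)=\int f^0\rd\bmv$ and $E(0)=\int f^0|\bmv|^2\rd\bmv$, completing the proof.

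The nontrivial ingredient is the explicit identification of the collision invariants inside $\widehat{\mathbb{T}}_N^d$; everything else is a mechanical application of the Petrov--Galerkin definition and the bi-orthogonality. I would not expect a serious obstacle beyond verifying that the choice of algebraic mapping with $S=1$, together with the factor $\mu^{-2}$ in the definition of $\widehat{T}_k$, is exactly what is needed to place $1$ and $v^2$ in the span of $\{\widehat{T}_0,\widehat{T}_2\}$; this is the design feature of the test space that drives the conservation result.
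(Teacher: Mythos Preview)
Your proposal is correct and follows essentially the same approach as the paper: explicitly identify $1$ and $v^2$ as linear combinations of $\widehat{T}_0$ and $\widehat{T}_2$, tensorize to place $1,|\bmv|^2\in\widehat{\mathbb{T}}_N^d$ for $N\ge 2$, then invoke (\ref{petrov}) with (\ref{consv}) for time-invariance and (\ref{projection_verified}) for the initial values. The only difference is that you specialize to $S=1$ whereas the paper carries a general scaling parameter $S$ through the computation; the algebra is identical in structure (with $(v^2+S^2)/S$ in place of $1+v^2$, etc.), so this is a cosmetic restriction rather than a gap.
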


\begin{proof}
In 1D, the first few Chebyshev polynomials read 
 \begin{equation*}
 T_0(\xi)=1, \quad T_1(\xi)=\xi, \quad T_2(\xi)=2\xi^2-1.
 \end{equation*} 
 With the algebraic mapping (\ref{algebraic}), we have
 \begin{equation*}
 T_0(\xi(v))=1, \quad T_1(\xi(v))=\frac{v}{\sqrt{v^2+S^2}}, \quad T_2(\xi(v))=\frac{v^2-S^2}{v^2+S^2}, \quad \mu(\xi(v))=\frac{\sqrt{S}}{\sqrt{v^2+S^2}}.
 \end{equation*} 
 Then
 \begin{equation*}
 \widehat{T}_k(v) = \f{[\mu(\xi(v))]^{-2}}{\sqrt{c_k}} T_k(\xi(v))=\frac{v^2+S^2}{\sqrt{c_k}S}T_k(\xi(v)).
 \end{equation*}  
Specifically,
\begin{equation*} 
 \widehat{T}_0(v)=\frac{v^2+S^2}{\sqrt{c_0}S}, \quad  \widehat{T}_1(v)=\frac{v\sqrt{v^2+S^2}}{\sqrt{c_1}S}, \quad \widehat{T}_2(v)=\frac{v^2-S^2}{\sqrt{c_2}S}.
 \end{equation*} 
Therefore,
 \begin{equation*}
1=\frac{\sqrt{c_0}}{2S} \widehat{T}_0(v)-\frac{\sqrt{c_2}}{2S}\widehat{T}_2(v), \quad v^2=\frac{\sqrt{c_0}S}{2}\widehat{T}_0(v)+\frac{\sqrt{c_2}S}{2}\widehat{T}_2(v).
\end{equation*}
Hence we can replace $(\widehat{T}_0(v), \widehat{T}_2(v))$  by  $(1,v^2)$ as basis functions, namely, 
 $$\widehat{\mathbb{T}}_N^1=\text{span}\{1,\widehat{T}_1,v^2,\widehat{T}_3,\widehat{T}_4,\cdots,\widehat{T}_N\}.$$
In $d$ dimensions, it is easy to see 
\begin{equation*} 
1, \ v_1^2, \ v_2^2, \ \dots, \ v_d^2 \in \widehat{\mathbb{T}}_N^d \quad \text{for } N\geq 2.
\end{equation*}
In other words, we have shown that $1, |\bmv|^2\in \widehat{\mathbb{T}}_N^d$ for $N\geq 2$.

On the other hand, by \eqref{def_mom} and \eqref{petrov}, we have
\begin{equation*}
 \begin{split}
     \frac{\rd }{\rd{t}} \rho(t)=&   (\partial_t f_N(t,\bmv),1)_{\R^d}=\left(Q (f_N, f_N), 1 \right)_{\R^d}=0;\\
       \frac{\rd }{\rd{t}} E(t)=&   (\partial_t f_N(t,\bmv),|\bmv|^2)_{\R^d}=\left(Q (f_N, f_N), |\bmv|^2 \right)_{\R^d}=0,
\end{split}
\end{equation*}
where in the last equality we used the conservation property (\ref{consv}) of the collision operator.

It remains to show
\begin{equation*}
\int_{\R^d} f_N(0,\bmv) \, \rd \bmv=\int_{\R^d} f^0(\bmv) \, \rd \bmv, \quad \int_{\R^d} f_N(0,\bmv)|\bmv|^2 \, \rd \bmv=\int_{\R^d} f^0(\bmv)|\bmv|^2 \, \rd \bmv.
\end{equation*}
Note that $f_N(0,\bmv)=\pi^{4,d}f^0$, it suffices to show
\begin{equation*}
(\pi^{4,d}f^0-f^0,1)_{\mathbb{R}^d}=(\pi^{4,d}f^0-f^0,|\bmv|^2)_{\mathbb{R}^d}=0,
\end{equation*}
which is true by (\ref{projection_verified}) (with $\alpha=4$).
\end{proof}

%----------------------------------------------------------------------------------------
\section{Numerical realization}
\label{Sec:num_realiz}

To implement the proposed spectral method, one needs to solve the ODE system (\ref{spectral_ODE}). For time discretization, one can just use the explicit Runge-Kutta methods. Hence, the key is the efficient evaluation of $\mQ_{\bm{k}}^N$ as defined in (\ref{Qk_component}). 

In this section, we introduce two algorithms to compute $\mQ_{\bm{k}}^N$. The first one is a direct algorithm that treats $\mQ_{\bm{k}}^N$ as a matrix/tensor-vector multiplication. Since the weight matrix/tensor does not depend on the numerical solution $f_N$, it can be precomputed and stored for repeated use. This approach is simple but will soon meet a bottleneck when $N$ increases since the memory requirement as well as the online computational cost can get extremely high. To alleviate this, we propose a fast algorithm, where the key idea is to recognize the gain term of the collision operator as a non-uniform discrete Fourier cosine transform to be accelerated by the non-uniform FFT (NUFFT). Note that this is possible because we are using the mapped Chebyshev functions as a basis, which is related to the Fourier cosine series.

%----------------------------------------------------------------------------------------
\subsection{A direct algorithm}

To derive the direct algorithm, we substitute (\ref{fN_Expansion}) into (\ref{Qk_component}) to obtain
\bea
   \mQ_{\bm{k}}^N 
   &=& 
   \sum_{0\le \bm{i,j}\le N} 
   \wtf_{\bm i} \wtf_{\bm j}
   \int_{\R^d} \int_{\R^d} \int_{S^{d-1}}
   \mathcal{B} (\bmv,\bmvs,{\bm \sg})\,
   \widetilde{\bm T}_{\bm i}(\bmv) \widetilde{\bm T}_{\bm j}(\bmvs)
   [\widehat{\bm T}_\bmk(\bmv') - \widehat{\bm T}_\bmk(\bmv)] 
   \, {\rm d} {\bm \sg} \, {\rm d} \bmv\, {\rm d} \bmvs
   \nn \\
   &=&
   \sum_{0\le \bm{i,j}\le N} 
   \wtf_{\bm i} \wtf_{\bm j}
   \left[\widetilde{I}_1(\bm{i,j,k}) - \widetilde{I}_2(\bm{i,j,k})\right], 
   \quad 
   0 \le \bmk \le N,
   \label{direct_online}
\eea
where
\bea
   \widetilde{I}_1(\bm{i,j,k})
   &:=& 
   \int_{\R^d} \int_{\R^d} \int_{S^{d-1}}
  \mathcal{B} (\bmv,\bmvs,{\bm \sg})\,
   \widetilde{\bm T}_{\bm i}(\bmv) \widetilde{\bm T}_{\bm j}(\bmvs)
   \widehat{\bm T}_\bmk(\bmv') 
   \, {\rm d} {\bm \sg} \, {\rm d} \bmv\, {\rm d} \bmvs,
\eea
\bea
   \widetilde{I}_2(\bm{i,j,k})
   &:=& 
   \int_{\R^d} \int_{\R^d} \int_{S^{d-1}}
  \mathcal{B} (\bmv,\bmvs,{\bm \sg})\,
   \widetilde{\bm T}_{\bm i}(\bmv) \widetilde{\bm T}_{\bm j}(\bmvs)
   \widehat{\bm T}_\bmk(\bmv) 
   \, {\rm d} {\bm \sg} \, {\rm d} \bmv\, {\rm d} \bmvs.
  \eea
Since the tensors $ \widetilde{I}_1(\bm{i,j,k})$ and $\widetilde{I}_2(\bm{i,j,k})$ do not depend on coefficients $\{ \wtf_{\bm k} \}_{0\leq {\bm k} \leq N}$, a straightforward way to evaluate $\mQ_{\bm{k}}^N $ is to precompute $ \widetilde{I}_1(\bm{i,j,k})$ and $\widetilde{I}_2(\bm{i,j,k})$, and then evaluate the sum in (\ref{direct_online}) directly in the online computation. This is what we refer to as the direct algorithm. We observe that this algorithm requires $\mO(N^{3d})$ memory to store the tensors $\widetilde{I}_1$ and $\widetilde{I}_2$; and to evaluate (\ref{direct_online}), it requires $\mO(N^{3d})$ operations. Both the memory requirement and online computational cost can be quite demanding especially for $d=3$ and large $N$.

We give some details on how to approximate $ \widetilde{I}_1(\bm{i,j,k})$ and $\widetilde{I}_2(\bm{i,j,k})$, though this step can be completed in advance and does not take the actual computational time. We first perform a change of variables $(\bmv,\bmvs)\rightarrow (\bmv(\bmxi),\bmvs(\bmeta))$ to transform the integrals of $(\bm{v,v_*}) \in \R^d \times \R^d$ into integrals of $(\bmxi, \bmeta)\in I^d \times I^d$, using the mapping introduced in Section~\ref{Subsec:Basis_function}:
\bea
   \widetilde{I}_1(\bm{i,j,k})
     &=& 
   \int_{I^d} \int_{I^d} 
   G_\bmk(\bmxi, \bmeta)
   \f{{\bm T}_{\bm i}(\bmxi) {\bm T}_{\bm j}(\bmeta) }{\sqrt{\bm{c_i c_j}}} {\bm \og}({\bm \xi}) {\bm \og}(\bmeta)
   \, {\rm d} {\bmxi}\, {\rm d} {\bmeta},
   \label{tensor_I1}
\eea
\bea
   \widetilde{I}_2(\bm{i,j,k})
     &=& 
   \int_{I^d} \int_{I^d} 
   L_\bmk(\bmxi, \bmeta)
   \f{{\bm T}_{\bm i}({\bmxi}) {\bm T}_{\bm j}(\bmeta) }{\sqrt{\bm{c_i c_j}}}   {\bm \og}({\bmxi}) {\bm \og}({\bmeta})
   \, {\rm d} {\bmxi}\, {\rm d} {\bmeta}, 
      \label{tensor_I2}
\eea
where
\bea 
   G_\bmk(\bmxi, \bmeta)
   &:=& 
   \left[{\bm \mu}({\bmxi}) {\bm \mu}({\bmeta})\right]^2
   \int_{S^{d-1}}\mathcal{B} (\bmv(\bmxi),\bmvs(\bmeta),{\bm \sg})
   \f{
   {\bm T}_\bmk \left( 
      \bmzeta_{(\bmxi, \bmeta, \bmsg)} 
    \right)}
   {\sqrt{\bm c_k} 
   [{\bm \mu}\left( 
      \bmzeta_{(\bmxi, \bmeta, \bmsg)} 
    \right)]^2}
   \, {\rm d} {\bm \sg} ,  \label{GGLL1} \\
   L_\bmk(\bmxi, \bmeta)
   &:=& 
   \f{{\bm T}_\bmk({\bmxi}) \left[{\bm \mu}({\bmeta}) \right]^2 }{\sqrt{\bm{c_k}}}
   \int_{S^{d-1}}
   \mathcal{B} (\bmv(\bmxi),\bmvs(\bmeta),{\bm \sg})
   \, {\rm d} {\bm \sg}.   \label{GGLL2}
\eea
Notice that in (\ref{GGLL1}), $\bmzeta_{(\bmxi, \bmeta, \bmsg)}\in I^d$ is the value transformed from 
$$\bmv' = \f{1}{2} (\bmv(\bmxi)+\bmvs(\bmeta)) + \f{1}{2} |\bmv(\bmxi)-\bmvs(\bmeta)| {\bm \sg}\in \mathbb{R}^d$$ under the same mapping. To approximate the above integrals in $\bmxi$, $\bmeta$, and $\bmsg$, we choose $M_\bmv$ Chebyshev-Gauss-Lobatto quadrature points in each dimension of $I^d$ for both $\bmxi$ and $\bmeta$; and $M_{\bmsg}$ quadrature points on the unit sphere $S^{d-1}$ (for $d=2$, this can be the uniform points in polar angle; for $d=3$, this can be the Lebedev quadrature \cite{lebedev1976quadratures}). Therefore, for each fixed index $\bmk$, (\ref{tensor_I1}) and (\ref{tensor_I2}) are forward Chebyshev transforms of the functions $G_\bmk(\bmxi, \bmeta)$ and $L_\bmk(\bmxi, \bmeta)$, respectively. Thus, they can be evaluated efficiently using the fast Chebyshev transform. 

\subsection{A fast algorithm}

To introduce the fast algorithm, we take the original form (\ref{Qk_component}) and split $\mQ_{\bm{k}}^{N}$ into a gain term and a loss term as $\mQ_{\bm{k}}^N =\mQ_{\bm{k}}^{N,+} -\mQ_{\bm{k}}^{N,-}$, where
\bea
   \mQ_{\bm{k}}^{N,+}  &=& \int_{\R^d} \left(\int_{\R^d} \int_{S^{d-1}} \mathcal{B} (\bmv,\bmvs,{\bm \sg}) f_N(\bmvs)\,  \widehat{\bm T}_\bmk(\bmv') \, {\rm d} {\bm \sg} \,{\rm d} \bmvs\right) f_N(\bmv)\, {\rm d} \bmv, \label{Qplus}\\
    \mQ_{\bm{k}}^{N,-}& =& \int_{\R^d} \left(\int_{\R^d} \int_{S^{d-1}} \mathcal{B} (\bmv,\bmvs,{\bm \sg}) f_N(\bmvs) \, {\rm d} {\bm \sg} \,{\rm d} \bmvs\right) f_N(\bmv)\,\widehat{\bm T}_\bmk(\bmv) \, {\rm d} \bmv. \label{Qminus}
\eea

We propose to evaluate $\mQ_{\bm{k}}^{N,+}$ and $\mQ_{\bm{k}}^{N,-}$ following the above expressions. To this end, given the coefficients $\{ \wtf_{\bm{k}} \}_{0 \le \bmk \le N}$ at each time step, we first reconstruct $f_N$ as in (\ref{fN_Expansion}) at $M_\bmv$ Chebyshev-Gauss-Lobatto quadrature points in each dimension of $\bmv$ (for an accurate approximation we choose $M_\bmv=N+2$). This can be achieved by the fast Chebyshev transform in $\mathcal{O}(M_{\bmv}^d\log M_{\bmv})$ operations.

{\bf To evaluate the gain term $\mQ_{\bm{k}}^{N,+}$}, we change the integrals of $(\bm{v,v_*}) \in \R^d \times \R^d$ into integrals of $(\bmxi, \bmeta)\in I^d \times I^d$ in (\ref{Qplus}) (similarly as in the previous subsection for $\widetilde{I}_1(\bm{i,j,k})$):
\begin{equation}
\begin{split}
 \mQ_{\bm{k}}^{N,+} 
         &=  \int_{I^d} 
   \left( \int_{I^d} 
      \int_{S^{d-1}} 
   \mathcal{B} (\bmv(\bmxi),\bmvs(\bmeta),{\bm \sg}) f_N(\bmvs(\bmeta))
   \f{{\bm T}_\bmk\left( 
      \bmzeta_{(\bmxi, \bmeta, \bmsg)} 
   \right)}{\sqrt{\bm{c}_\bmk}
   \left[\bmmu \left( 
      \bmzeta_{(\bmxi, \bmeta, \bmsg)} 
   \right)\right]^2}
   \f{\bm{\og(\eta)}}{\left[{\bm \mu}(\bm{\eta})  \right]^2}
      \, {\rm d} {\bm \sg}
       \, {\rm d} {\bm \eta}
   \right)
   f_N(\bmv(\bmxi)) \f{\bm{\og}(\bmxi)}{[\bm{\mu}(\bmxi)]^2} \, {\rm d} {\bm \xi}\\
 &=  \int_{I^d} \left(\int_{S^{d-1}}F_\bmk ({\bmsg},\bmxi)\, {\rm d} {\bm \sg}\right) f_N(\bmv(\bmxi)) \f{\bm{\og}(\bmxi)}{[\bm{\mu}(\bmxi)]^2} \, {\rm d} {\bm \xi}, \label{Qplus1}
\end{split}
\end{equation}
where
\begin{equation} \label{FF}
F_\bmk ({\bmsg},\bmxi):=  \int_{I^d} 
  \mathcal{B} (\bmv(\bmxi),\bmvs(\bmeta),{\bm \sg}) f_N(\bmvs(\bmeta))
   \f{{\bm T}_\bmk\left( 
      \bmzeta_{(\bmxi, \bmeta, \bmsg)} 
   \right)}{\sqrt{\bm{c}_\bmk}
   \left[\bmmu \left( 
      \bmzeta_{(\bmxi, \bmeta, \bmsg)} 
   \right)\right]^2}
   \f{\bm{\og(\eta)}}{\left[{\bm \mu}(\bm{\eta})  \right]^2}
       \, {\rm d} {\bm \eta}.
\end{equation}
Suppose $M_\bmv$ quadrature points are used in each dimension of ${\bmv}$ and ${\bmvs}$ and $M_{\bmsg}$ points are used on the sphere $S^{d-1}$, a direct evaluation of (\ref{FF}) would require $\mathcal{O}(M_{\bmsg}M_{\bmv}^{2d}N^d)$ operations. Given $F_\bmk ({\bmsg},\bmxi)$, a direct evaluation of (\ref{Qplus1}) would take $\mathcal{O}(M_{\bmsg}M_{\bmv}^{d}N^d)$ operations. Therefore, the major bottleneck is to compute $F_\bmk ({\bmsg},\bmxi)$, which is prohibitively expensive without a fast algorithm. Our main idea is to recognize (\ref{FF}) as a non-uniform discrete Fourier cosine transform so it can be evaluated by the non-uniform fast Fourier transform  (NUFFT). We will see that the total complexity to evaluate $F_\bmk ({\bmsg},\bmxi)$ can be brought down to $\mathcal{O}(M_{\bmsg}M_{\bmv}^{2d} |\log \epsilon|+M_{\bmsg}M_{\bmv}^dN^d\log N)$, where $\epsilon$ is the requested precision in the NUFFT algorithm.

Applying the Chebyshev-Gauss-Lobatto quadrature $(\bmeta_{\bmj},w_{\bmj})_{1\leq \bmj \leq M_{\bmv}}$, (\ref{FF}) becomes
\bea
\label{num_fast_k}
F_\bmk(\bmsg, \bmxi) &\approx&
\sum_{1\leq \bmj \leq M_{\bmv}} w_\bmj
\f{ \mB (\bmv(\bmxi), \bmvs(\bmeta_\bmj), \bmsg) f_N(\bmvs(\bmeta_\bmj))}{\sqrt{c_\bmk} [\bmmu(\bmzeta_{(\bmxi, \bmeta_\bmj, \bmsg)})]^2[\bmmu(\bmeta_\bmj)]^2}
{\bm T}_\bmk(\bmzeta_{(\bmxi, \bmeta_\bmj, \bmsg)}) 
\nn \\
& = &
\sum_{1\leq \bmj \leq M_{\bmv}} w_\bmj
\f{ \mB (\bmv(\bmxi), \bmvs(\bmeta_\bmj), \bmsg) f_N(\bmvs(\bmeta_\bmj))}{\sqrt{c_\bmk} [\bmmu(\bmzeta_{(\bmxi, \bmeta_\bmj, \bmsg)})]^2[\bmmu(\bmeta_\bmj)]^2}
\prod_{l=1}^d \cos\left(k_l \arccos( \bmzeta_{(\bmxi, \bmeta_\bmj, \bmsg),l})\right)
\nn \\
& = & 
\f{1}{\sqrt{c_\bmk}} 
\sum_{1\leq \bmj \leq M_{\bmv}}
q_\bmj  
\prod_{l=1}^d \cos\left(k_l \bmz_{\bmj, l} \right), 
\eea 
where $\bmzeta_{(\bmxi, \bmeta_\bmj, \bmsg),l}$ is the $l$-th component of $\bmzeta_{(\bmxi, \bmeta_\bmj, \bmsg)}$, and 
\begin{equation}
 q_\bmj := w_\bmj
\f{ \mB (\bmv(\bmxi), \bmvs(\bmeta_\bmj), \bmsg) f_N(\bmvs(\bmeta_\bmj))}{[\bmmu(\bmzeta_{(\bmxi, \bmeta_\bmj, \bmsg)})]^2[\bmmu(\bmeta_\bmj)]^2}, \quad \bmz_{\bmj, l}:=\arccos( \bmzeta_{(\bmxi, \bmeta_\bmj, \bmsg),l}).
\end{equation}
Note that $q_\bmj$ and $\bmz_\bmj$ depend on $\bmsg$ and $\bmxi$. (\ref{num_fast_k}) is almost like a non-uniform discrete Fourier cosine transform. Indeed, we propose to evaluate $F_\bmk(\bmsg, \bmxi)$ as follows. 

For each fixed $\bmsg$ and $\bmxi$, we compute
\be \label{nufft}
\widetilde{F}_\bmK:= \sum_{1\leq \bmj \leq M_{\bmv}}q_\bmj\exp(\bfi \bmK \cdot \bmz_{\bmj}), \quad -N\leq \bmK\leq N,
\ee
which is a non-uniform discrete Fourier transform mapping non-uniform samples $\bmz_{\bmj}\in [0,\pi]^d$ into frequencies $\bmK \in[-N,N]^d$. This can be done efficiently using the NUFFT. In recent years, various NUFFT algorithms have been developed. In our numerical realization, we employ an efficient algorithm called FINUFFT \cite{barnett2019parallel}. The general idea is to apply an interpolation between non-uniform samples and an equispaced grid, and then perform the uniform FFT on the new grid. This algorithm only costs $\mO(M_{\bmv}^d|\log \ep|+N^d \log N)$ operations in computing (\ref{nufft}) with the requested precision $\ep$. Once we obtain $\widetilde{F}_\bmK$, $F_\bmk(\bmsg, \bmxi)$ can be retrieved as
\begin{itemize}
\item in 2D
\be
F_\bmk(\bmsg, \bmxi)=\frac{1}{\sqrt{c_\bmk}} \frac{1}{2}  {\bf Re} \left(\widetilde{F}_{(k_1,k_2)}+\widetilde{F}_{(-k_1,k_2)}\right);
\ee
\item in 3D
\be
F_\bmk(\bmsg, \bmxi)=\frac{1}{\sqrt{c_\bmk}} \frac{1}{4}  {\bf Re} \left(\widetilde{F}_{(k_1,k_2,k_3)}+\widetilde{F}_{(-k_1,k_2,k_3)}+\widetilde{F}_{(k_1,-k_2,k_3)}+\widetilde{F}_{(k_1,k_2,-k_3)}\right).
\ee
\end{itemize}
This procedure needs to be repeated for every $\bmsg$ and $\bmxi$, hence the overall computational cost for getting $F_\bmk(\bmsg, \bmxi)$ is $\mathcal{O}(M_{\bmsg}M_{\bmv}^{2d} |\log \epsilon|+M_{\bmsg}M_{\bmv}^dN^d\log N)$.

{\bf To evaluate the loss term $\mQ_{\bm{k}}^{N,-}$}, we change the integrals of $(\bm{v,v_*}) \in \R^d \times \R^d$ into integrals of $(\bmxi, \bmeta)\in I^d \times I^d$ in (\ref{Qminus})  (similarly as in the previous subsection for $\widetilde{I}_2(\bm{i,j,k})$):
\begin{equation}
\mQ_{\bm{k}}^{N,-}=\int_{I^d} \left(\int_{I^d} \int_{S^{d-1}}
  \mathcal{B} (\bmv(\bmxi),\bmvs(\bmeta),{\bm \sg}) 
    f_N(\bmvs(\bmeta)) \f{\bm{\og(\eta)}}{\left[{\bm \mu}(\bm{\eta})  \right]^2} \,
    {\rm d} {\bm \sg}  
       \, {\rm d} {\bm \eta} \right) f_N(\bmv(\bmxi))\,  \f{{\bm T}_\bmk({\bmxi}) \bm{\og(\bmxi)}  }{\sqrt{\bm{c_k}} \left[{\bm \mu}({\bmxi}) \right]^4}
 \, {\rm d} \bmxi.
\end{equation}
Then one can just evaluate the terms in the parentheses directly with complexity $\mathcal{O}(M_{\bmv}^{2d}M_{\bmsg})$. The outer integral in $\bmxi$ can be viewed as the Chebyshev transform of some function thus can be evaluated efficiently by the fast Chebyshev transform in $\mathcal{O}(M_{\bmv}^d\log M_{\bmv})$. In particular, if we consider the Maxwell kernel, i.e., $\mathcal{B} (\bmv,\bmvs,{\bm \sg})\equiv \text{constant}$, terms in the parentheses only requires $\mathcal{O}(M_{\bmv}^{d})$ complexity. 

\subsection{Comparison of direct and fast algorithms}

To summarize, we list the storage requirement and (online) computational complexity for both the direct and fast algorithms in Table 1. Note that we only list the dominant complexity for each term. It is clear that the main cost of the fast algorithm comes from evaluating the gain term. Compared with the direct algorithm, the fast algorithm is generally faster as $M_{\bmsg}$ can be chosen much smaller than $N^d$ in practice (see Section~\ref{Sec:num}). Most importantly, the fast algorithm does not require any precomputation with excessive storage requirement and everything can be computed on the fly.

\begin{table}[H]  
   \begin{center}
   \begin{tabular}{ c|c|c|c} 
   \hline
   & \multicolumn{2}{c|}{direct algorithm} &  fast algorithm \\ 
   \hline
   & storage  & (online) operation & (online) operation \\
   \hline
   gain term & $\mO(N^{3d})$ & $ \mO(N^{3d})$  & $\mathcal{O}(M_{\bmsg}M_{\bmv}^{2d} |\log \epsilon|+M_{\bmsg}M_{\bmv}^dN^d\log N)$ \\
   loss term &  $\mO(N^{3d})$ & $ \mO(N^{3d})$ & $\mathcal{O}(M_{\bmsg}M_{\bmv}^{2d})$\\
   \hline 
    \end{tabular}
   \caption{Storage requirement and (online) computational cost of the direct and fast algorithms. $N$ is the number of spectral modes in each dimension of $\bmv$; $M_{\bmv}=\mO(N)$ is the number of quadrature points in each dimension; $M_{\bmsg}\ll N^d$ is the number of quadrature points on the sphere $S^{d-1}$; and $\ep$ is the requested precision in the NUFFT algorithm. The proposed fast algorithm does not require extra storage other than that storing the computational target, e.g., the gain and loss terms. }
   \end{center}
\end{table}

%----------------------------------------------------------------------------------------
% Numerical examples
%----------------------------------------------------------------------------------------
\section{Numerical examples}
\label{Sec:num}

In this section, we perform extensive numerical tests to demonstrate the accuracy and efficiency of the proposed Petrov-Galerkin spectral method in both 2D and 3D.

Recall that the main motivation of the current work is to obtain better accuracy by considering approximations in an unbounded domain. To illustrate this point, we will compare three methods to solve the Boltzmann equation:
\begin{description}
   \item[(1) Fast Fourier-Galerkin spectral method proposed in \cite{gamba2017fast}:] this method can achieve a good accuracy-efficiency tradeoff among the current deterministic methods for the Boltzmann equation. However, it requires the truncation of the domain to $[-L,L]^d$, where $L$ is often chosen empirically such that the solution is close to zero at the boundary.
   \item[(2) Fast Chebyshev-0 method:] the method proposed in the current paper using the logarithmic mapping (\ref{logarithmic}), where $r=0$ and the scaling parameter $S$ needs to be properly chosen.
   \item[(3) Fast Chebyshev-1 method:] the method proposed in the current paper using the algebraic mapping (\ref{algebraic}), where $r=1$ and the scaling parameter $S$ needs to be properly chosen.
\end{description}

In all three methods, the choice of truncation or mapping/scaling parameters has a great impact on the numerical accuracy. In the following tests, we first determine $L$ in the Fourier spectral method. Then for the two Chebyshev methods, we propose an adaptive strategy to determine the scaling parameter $S$: for example, in 1D, the Chebyshev-Gauss-Lobatto quadrature points on the interval $[-1, 1]$ are given by
$$
-1 = \xi_1 < \xi_2 < \ldots < \xi_{N} = 1.
$$
We choose $S$ such that the two quadrature points $\xi_2$ and $\xi_{N-1}$ are mapped to the boundary of $[-L, L]$, i.e.,
$$
v(\xi_1)=-\infty, \quad v(\xi_2) = -L, \quad v(\xi_{N-1}) = L, \quad v(\xi_N)=-\infty.
$$
Note that this $S$ is adaptive in the sense that different $N$ will correspond to different $S$.

%To measure the difference between the numerical solution $u_{\rm num}$ and reference solution $u_{\rm ref}$, we use the following norms:
%\bea
%      \|u_{\rm num} - u_{\rm ref}\|_{L^\ift(\mathbb{R}^d)} 
%      := \max_{\bm 0\le \bm{j} \le M_{\bmv}} 
%      |u_{\rm num}(\bmv_{\bm{j}}) - u_{\rm ref}(\bmv_{\bm{j}})|, \quad
%      \|u_{\rm num} - u_{\rm ref}\|_{L^2(\mathbb{R}^d)} 
%      := \sqrt{\sum_{0\le \bm{j} \le M_{\bmv}} {\bm w_j} 
%      |u_{\rm num}(\bmv_{\bm{j}}) - u_{\rm ref}(\bmv_{\bm{j}})|^2}, \nn 
%\eea
%where ${\bm v_j}$ and ${\bm w_j}$ are the quadrature points and weights in $\R^d$.

\subsection{2D examples}

\subsubsection{2D BKW solution}

We consider first the 2D BKW solution. This is one of the few known analytical solutions to the Boltzmann equation and a perfect example to verify the accuracy of a numerical method. 

When $d=2$ and the collision kernel $\mathcal{B}\equiv 1/(2\pi)$, the following is a solution to the initial value problem (\ref{MultiDBoltz}):
\be
   f_{\rm BKW}(t,\bmv) = 
   \f{1}{2\pi K^2} \exp\left(-\f{\bmv^2}{2K}\right)
   \left(2K-1 + \f{1-K}{2K} \bmv^2\right),
   \label{2d_bkw_initial}
\ee
where $K=1-\exp(-t/8)/2$. By taking the time derivative of $ f_{\rm BKW}(t,\bmv)$, we can obtain the exact collision operator as
\be
   Q_{\rm BKW}(f)
   = \left\{ \left(-\f{2}{K} + \f{\bmv^2}{2K^2}\right)f_{\rm BKW} + 
   \f{1}{2\pi K^2} \exp\left(-\f{\bmv^2}{2K}\right)
   \left(2 - \f{1}{2K^2} \bmv^2\right)
   \right\} K',
   \label{2d_bkw_rhs}
\ee
where $K' = \exp(-t/8)/16$. This way we can apply the numerical method to compute $ Q_{\rm BKW}(f)$ directly and check its error without worrying about the time discretization.

In the fast Fourier spectral method, we take $N_\rho = N$ quadrature points in the radial direction and $N_\tht = 8$ quadrature points on the unit circle (see \cite{gamba2017fast} for more details). In the fast Chebyshev methods, we take $M_\bmv = N+2$ quadrature points for each dimension of $(\bmv, \bmvs)$ and $M_{\bmsg} = N$ quadrature points on the unit circle. The precision in NUFFT is selected as $\ep = 1e-14$. The numerical error of $Q_{\rm BKW}(f)$ is estimated on a $200 \times 200$ uniform grid in the rectangular domain $ [-6.3, 6.3]^2$ at time $t=2$. 

{\bf Test 01:} In this test, we examine thoroughly the numerical errors concerning different truncation parameters $L$ in the Fourier method, and scaling parameters $S$ in the Chebyshev methods. The $L^2$ errors of $Q_{\rm BKW}(f)$ for three methods are presented in Figure~\ref{2d_bkw_01}. It is obvious that the accuracy is not good when $L$ and $S$ are too small or too large. When $L$ and $S$ are chosen appropriately, the accuracy is close to the machine precision. In Table~\ref{2d_bkw_02}, we record the best accuracy for a given $N$ of each method. One can see that the fast Chebyshev-0 method can always achieve the best accuracy.

\begin{figure}[htp!]
   \centering
   \includegraphics[width=.59\linewidth]{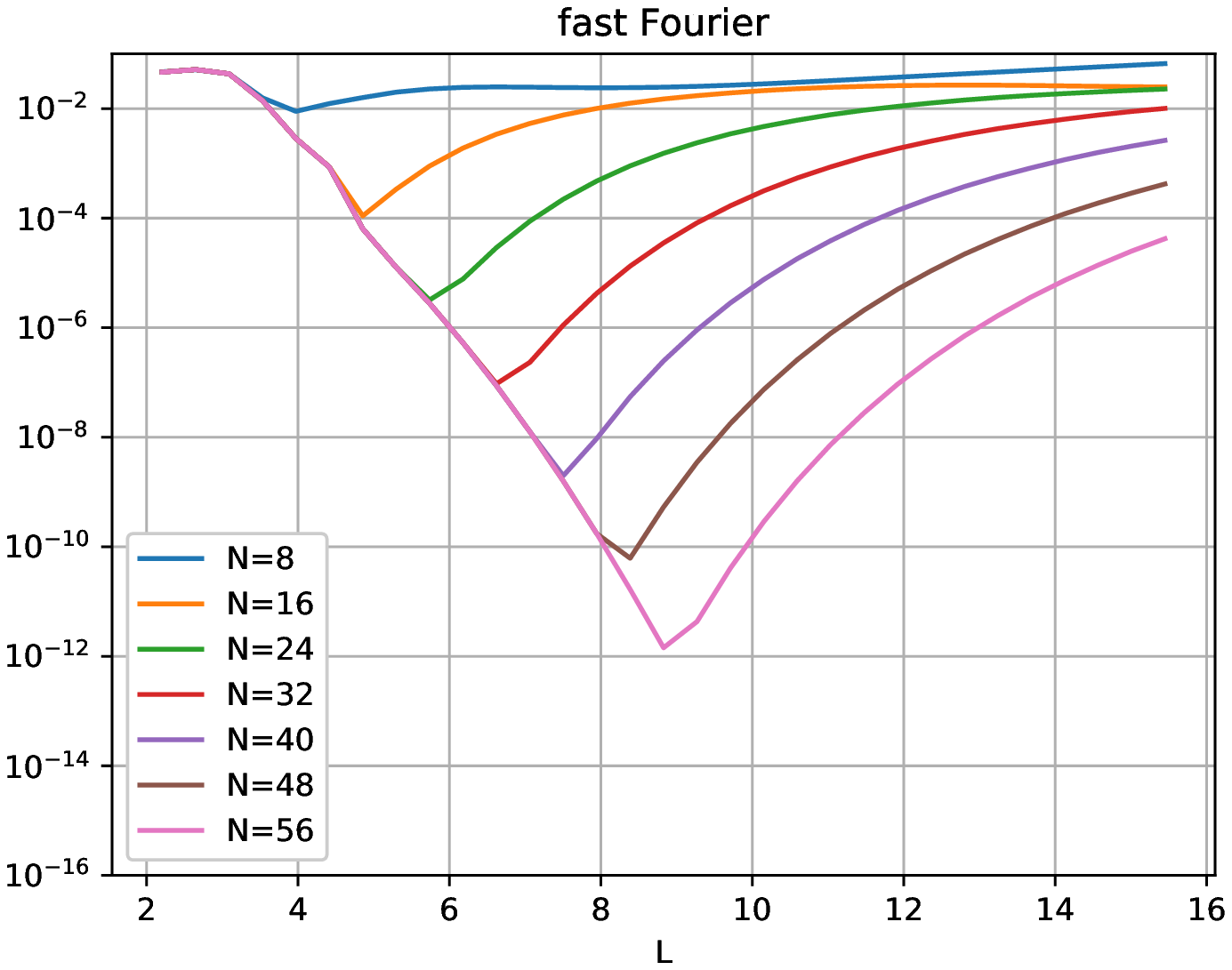}
   \includegraphics[width=.49\linewidth]{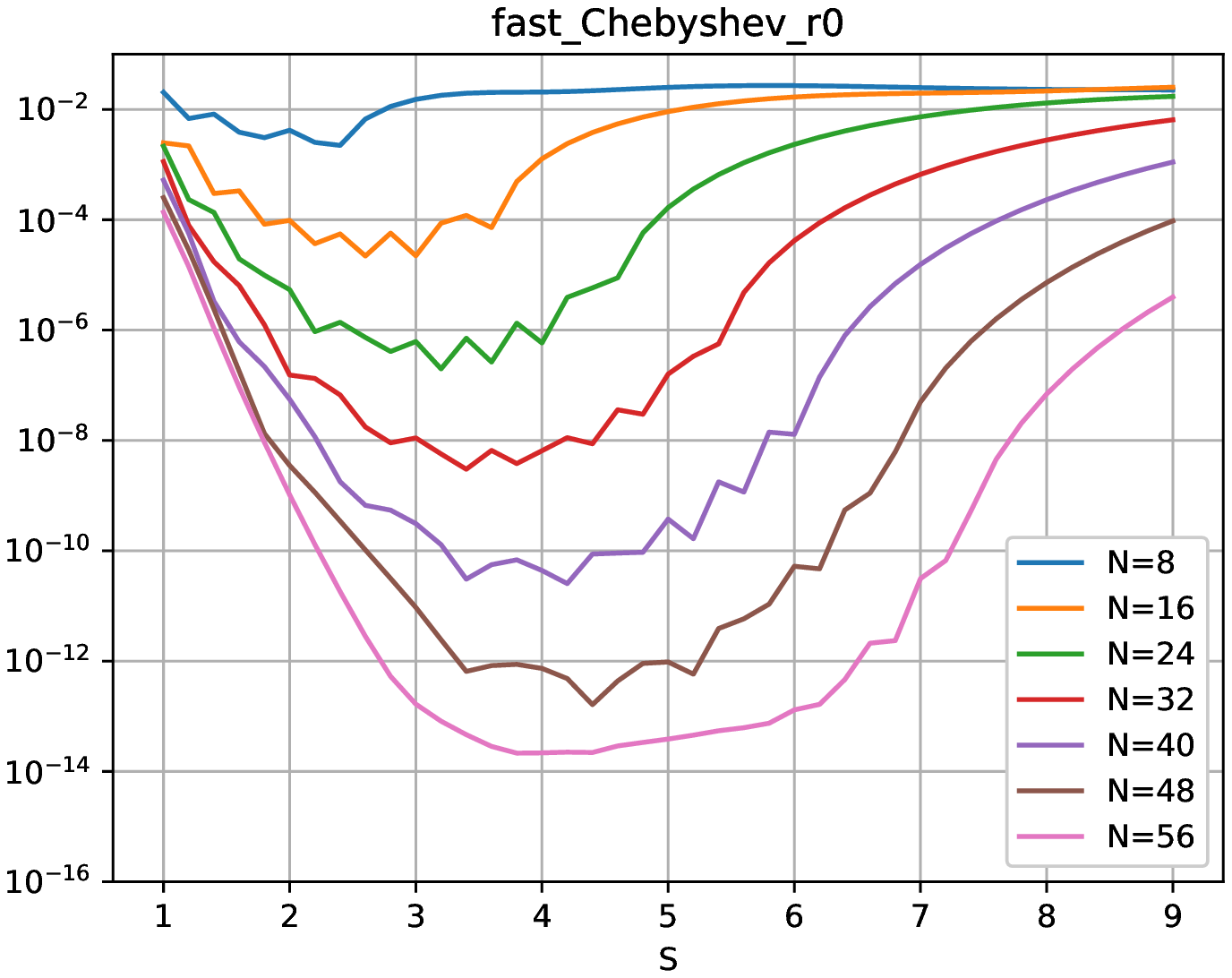}
   \includegraphics[width=.49\linewidth]{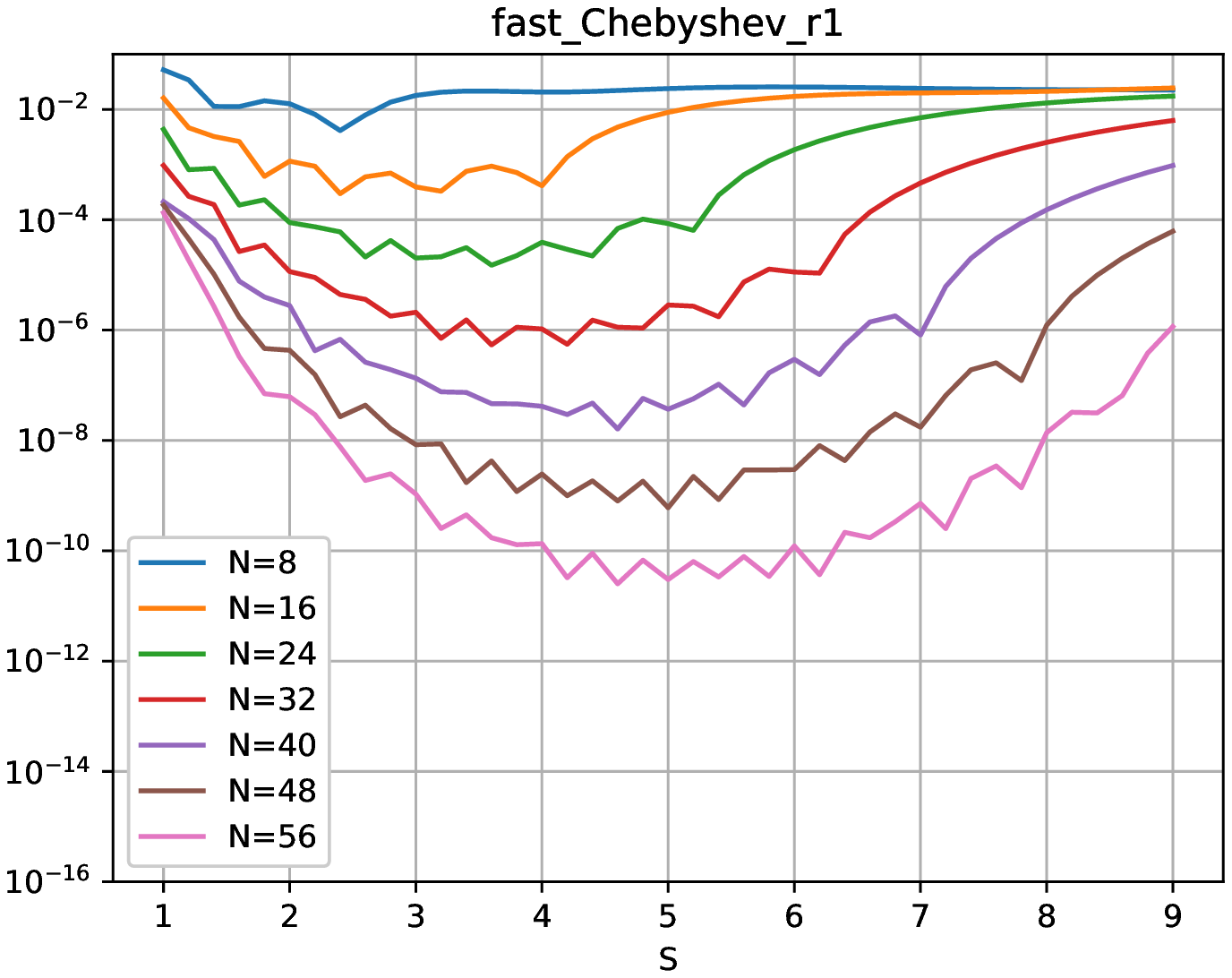}
   \caption{(2D BKW: Test 01) The $L^2$ error of $Q_{\rm BKW}(f)$ at time $t=2$. Top: fast Fourier method. %, a half-size ($L$) of the truncation domain is selected from $[2.2071, 15.4497]$.
   Bottom: fast Chebyshev methods.}  %scaling parameter $S$ is chosen from $[1.0, 9.0]$.}
   \label{2d_bkw_01}
\end{figure}

\begin{table}[htp!]
   \centering
   \begin{tabular}{l|c|c|c}
   \hline
   & fast Fourier & fast Chebyshev-0 & fast Chebyshev-1 \\ \cline{1-1}
   \hline
   $N = 8$  & 8.9223e-03 & 2.2289e-03 & 4.1388e-03 \\ \hline
   $N = 16$ & 1.0989e-04 & 2.2033e-05 & 2.9713e-04 \\ \hline
   $N = 24$ & 3.2104e-06 & 1.9843e-07 & 1.5004e-05 \\ \hline
   $N = 32$ & 9.4720e-08 & 3.0082e-09 & 5.3946e-07 \\ \hline
   $N = 40$ & 1.9836e-09 & 2.5434e-11 & 1.6120e-08 \\ \hline
   $N = 48$ & 6.1797e-11 & 1.6255e-13 & 6.0320e-10 \\ \hline
   $N = 56$ & 1.4315e-12 & 2.1482e-14 & 2.5213e-11 \\ \hline
   \end{tabular}
   \caption{(2D BKW: Test 01) The $L^2$ error of $Q_{\rm BKW}(f)$ at time $t=2$. The best accuracy for a given $N$ of each method.}
   \label{2d_bkw_02}
\end{table}

{\bf Test 02:} In this test, we fix the computational domain and examine the numerical errors concerning different $N$. In the Fourier method, we test $L=8.83$ and $L=13.24$. In the Chebyshev methods, we use the same $L$ to select the scaling parameter $S$ accordingly. The $L^\ift$ errors of $Q_{\rm BKW}(f)$ for three methods are presented in Figure~\ref{2d_bkw_03}. Among these three methods, the fast Chebyshev-0 method can achieve the best accuracy when $N$ is small. The fast Chebyshev-1 method doesn't provide a good approximation. This is because the quadrature points in Chebyshev-1 method are much more clustered near the origin compared to Chebyshev-0 method and apparently points located far away from the origin play an important role in the unbounded domain problem.

%\xd{In comparison to the logarithmic ($r=0$) mapped points, the algebraic ($r=1$) mapped CGL quadrature points are much more clustered near the origin and spread further. I have plotted the algebraic mapped CGL points in 'fig/Quad/CGL\_mapped\_pnts\_r1.eps'. Our strategy provides $S< 1$ for the fast Chebyshev-1 method when $M_\bmv \ge 24$. However, in Figure~\ref{2d_bkw_01}, $S<1$ is far away from the best accuracy for Chebyshev-1 method. This strategy doesn't work for the fast Chebyshev-1 method.}

\begin{figure}[htp!]
   \centering
   \includegraphics[width=.49\linewidth]{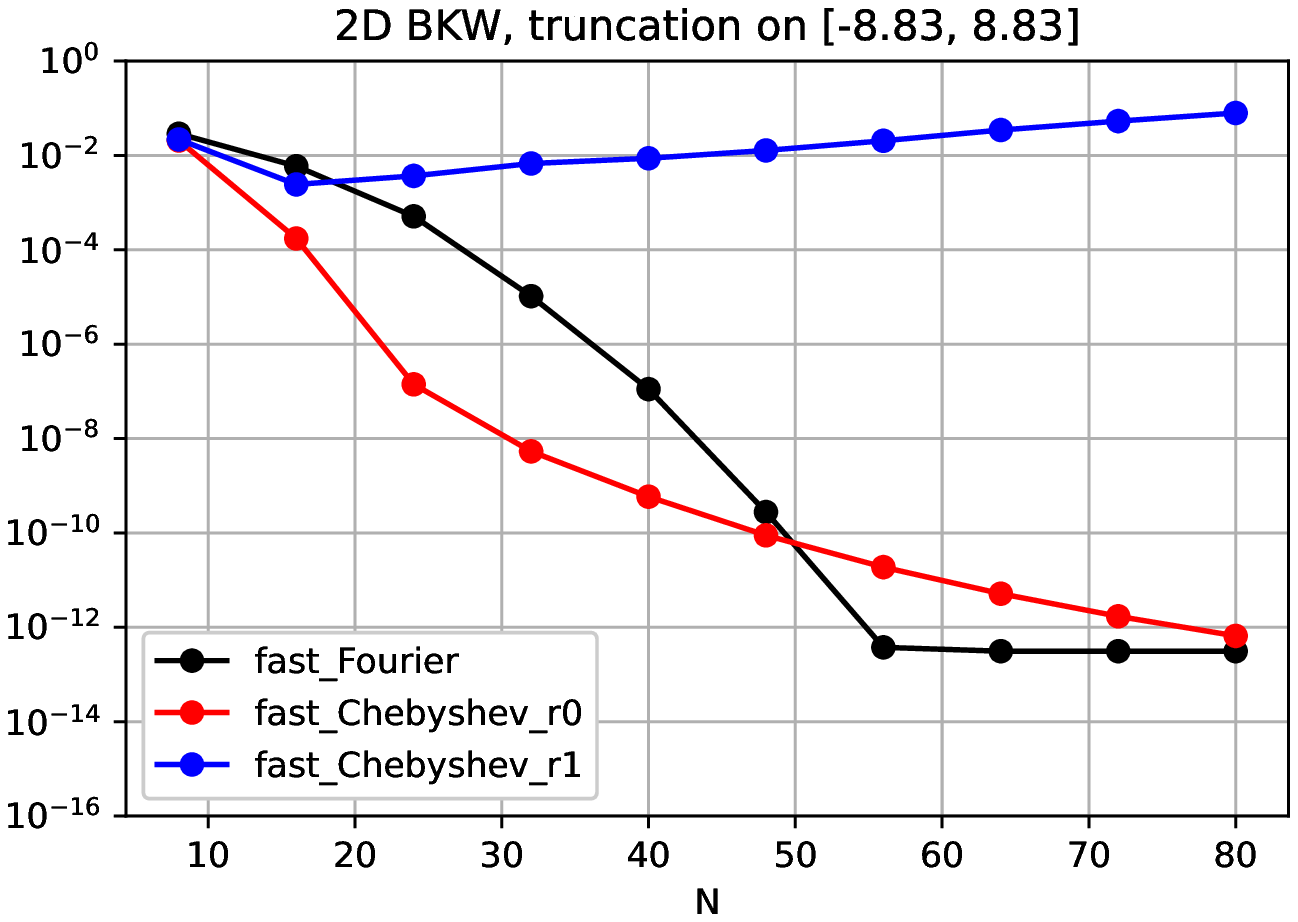}
   \includegraphics[width=.49\linewidth]{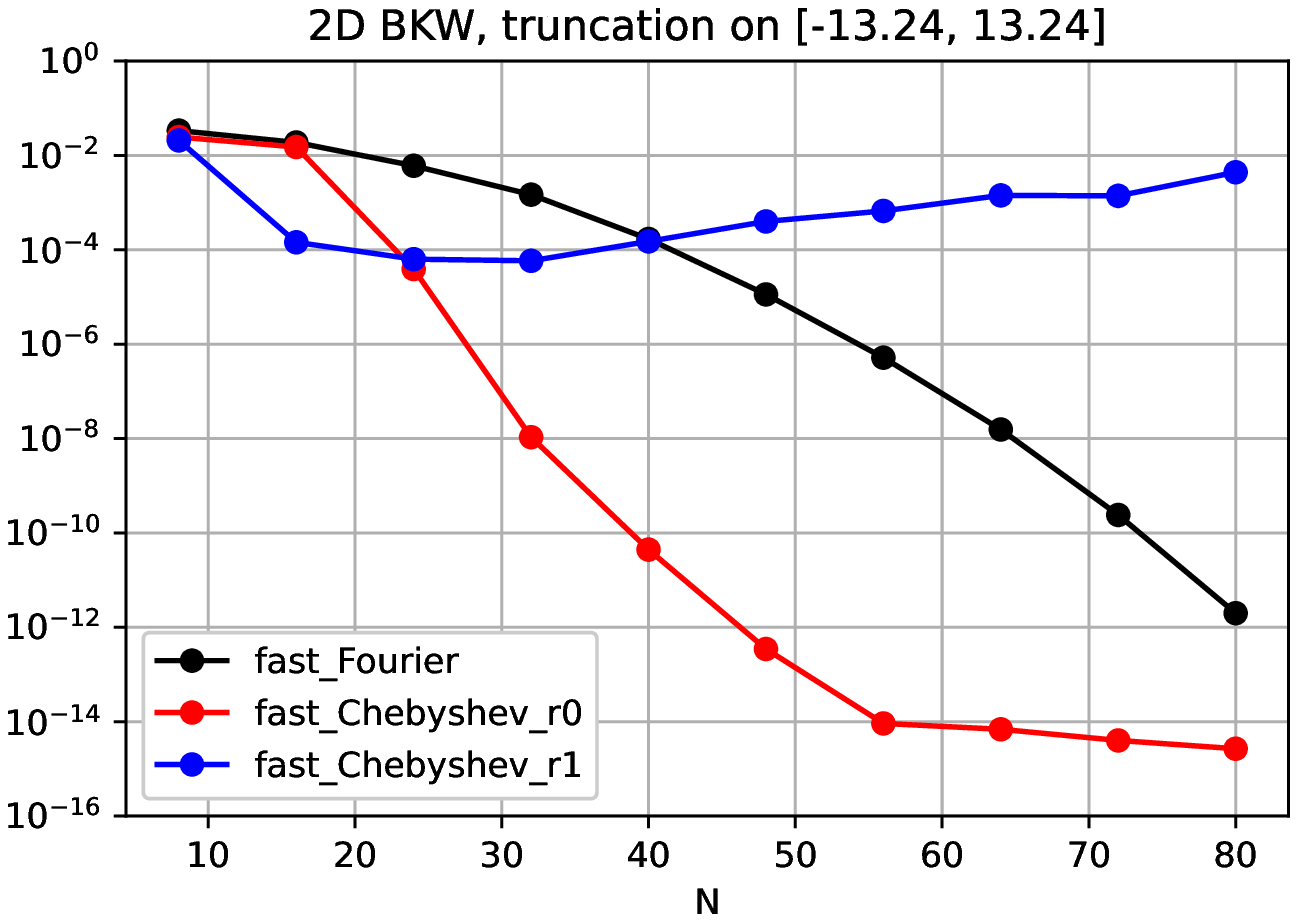}
   \caption{(2D BKW: Test 02) The $L^\infty$ error of $Q_{\rm BKW}(f)$ at time $t=2$. 
   Left: $L=8.83$;
   Right: $L=13.24$.}
   \label{2d_bkw_03}
\end{figure}

{\bf Test 03:} In this test, we examine the numerical errors of the Chebyshev methods with a fixed scaling parameter: $S=4$ in the fast Chebyshev-0 method; $S=5$ in the fast Chebyshev-1 method. These two values are selected based on the results in Figure~\ref{2d_bkw_01}. The $L^\ift$ errors of $Q_{\rm BKW}(f)$ for both methods are presented in Figure~\ref{2d_bkw_04}. As a comparison, results of the fast Fourier method are also plotted. Again for small $N$, the fast Chebyshev-0 method provides the best accuracy among three methods.

\begin{figure}[htp!]
   \centering
   \includegraphics[width=.59\linewidth]{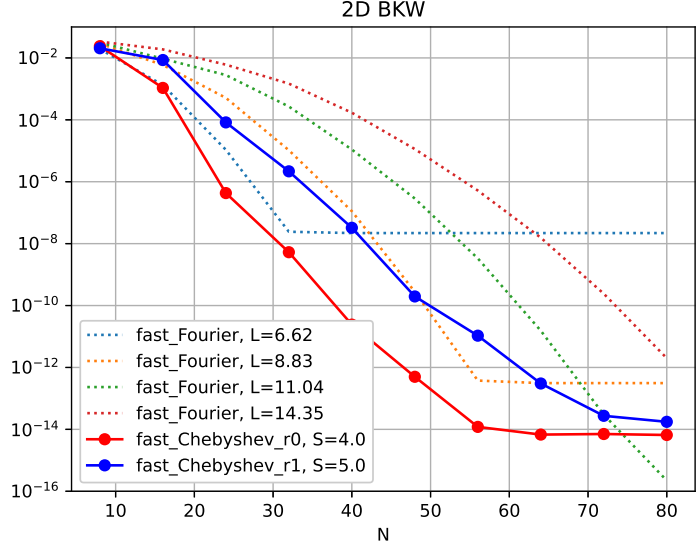}
   \caption{(2D BKW: Test 03) The $L^\infty$ error of $Q_{\rm BKW}(f)$ at time $t=2$. }
   \label{2d_bkw_04}
\end{figure}

{\bf Test 04:} In this test, we report the computational time of the direct (Chebyshev) algorithm and the fast (Chebyshev) algorithm. The computations were done on Intel(R) Core(TM) i7-6700 CPU in a single thread. Table~\ref{2d_BKW_time} shows the running time of the direct and fast algorithms concerning different $N$. Note that the direct algorithm is left out when $N\ge 32$ due to the memory constraint.

%Figure~\ref{2d_BKW_gain_time} demonstrates the running time of a single evaluation for the gain term (\ref{fast_cheby_gain}) in the fast Chebyshev method. 
%Here the expectation time is chosen as $t(N) = {\mathcal O}(N^5 \log(N))$. 

\begin{table}[htp!]
    \begin{center}
    \begin{tabular}{ c|c|c|c } 
    \hline
    & \multicolumn{2}{c|}{direct algorithm} & fast algorithm \\ 
    \hline
    & online {(sec)} & precomputation {(sec)} & online {(sec)} \\
    \hline
    $N=8$  
    & 0.0047 & 1.5956  & 0.194955 \\ \hline
    $N=16$ 
    & 0.1207 & 62.7991 & 2.036821 \\ \hline
    $N=32$ 
    & - & - & 24.779492 \\ \hline
    $N=64$  
    & - & - & 4.937722e+02\\ \hline
    $N=128$  
    & - & - & 1.331576e+04\\ \hline
    \end{tabular}
    \caption{(2D BKW: Test 04) Running time in second for a single evaluation of the gain term.}\label{2d_BKW_time}
    \end{center}
\end{table}

%\begin{figure}[htp!]
%   \centering
%   \includegraphics[width=.6\linewidth]{}
%   \caption{(2D BKW. Test 04) Running time of fast Chebyshev method, this test is performed in single thread.}
%   \label{2d_BKW_gain_time}
%\end{figure}

\subsubsection{Computing the moments}

We next consider the time evolution problem and check the accuracy for moments approximation. Since the fast Chebyshev-0 method performs generally better than the fast Chebyshev-1 method, we will restrict to the former in the following tests. The comparison with the fast Fourier method will still be considered.

In (\ref{MultiDBoltz}), we choose the collision kernel $\mathcal{B}\equiv 1/(2\pi)$ and the initial condition as
\be
   f^0(\bmv) = 
   \f{\rho_1}{2\pi T_1} \exp\left(-\f{(\bmv-V_1)^2}{2T_1}\right) + 
   \f{\rho_2}{2\pi T_2} \exp\left(-\f{(\bmv-V_2)^2}{2T_2}\right),
   \label{2d_maxwell_initial}
\ee
where $\rho_1 = \rho_2 = 1/2$, $T_1 = T_2 = 1$ and $V_1 = (x_1,y_1) = (-1,2)$, $V_2 = (x_2,y_2) = (3,-3)$.

Then for the momentum flow and energy flow defined as
\be
   P_{ij} = \int_{\R^2}f\bmv_i \bmv_j \, {\rm d} \bmv,\quad (i,j=1,2), \quad q_{i} = \int_{\R^2}f\bmv_i |\bmv|^2 \, {\rm d} \bmv,\quad (i=1,2),
\ee
we have their exact formulas 
\be
   P_{11} = -\f{9}{8} e^{-t/2} +\f{57}{8},\quad
   P_{12} = P_{21} =-5 e^{-t/2}-\f{1}{2},\quad
   P_{22} = \f{9}{8} e^{-t/2} +\f{51}{8},
\ee
and
\be
   q_1 = \f{1}{4} \left(11 e^{-t/2} +103 \right),\quad
   q_2= -\f{1}{8} \left(89 e^{-t/2} + 103 \right).
\ee

In the fast Fourier method, we take $N_\rho = N$ quadrature points in the radial direction and $N_\tht = N$ quadrature points on the unit circle. The truncation domain $[-L,L]^2$ is selected as $L=14.35$. In the fast Chebyshev-0 method, we take $M_\bmv = N+2$ quadrature points for each dimension of $(\bmv, \bmvs)$ and $M_{\bmsg} = N$ quadrature points on the unit circle. The precision in NUFFT is selected as $\ep = 1e-14$. The scaling parameter $S$ is adaptively chosen based on $L$. For both methods, we use the 4th-order Runge-Kutta method with $\Dt t = 0.02$ for time integration.

The absolute errors of the moments are presented in Figure~\ref{2d_P11}--\ref{2d_q2}. 
%Although not shown here, the conservation of density $\rho$ and momentum $m$ have also been verified for our methods.
The fast Chebyshev-0 method clearly provides a better approximation in comparison to the Fourier method for fixed $N$.

\begin{figure}[htp!]
   \centering
   \includegraphics[width=.49\linewidth]{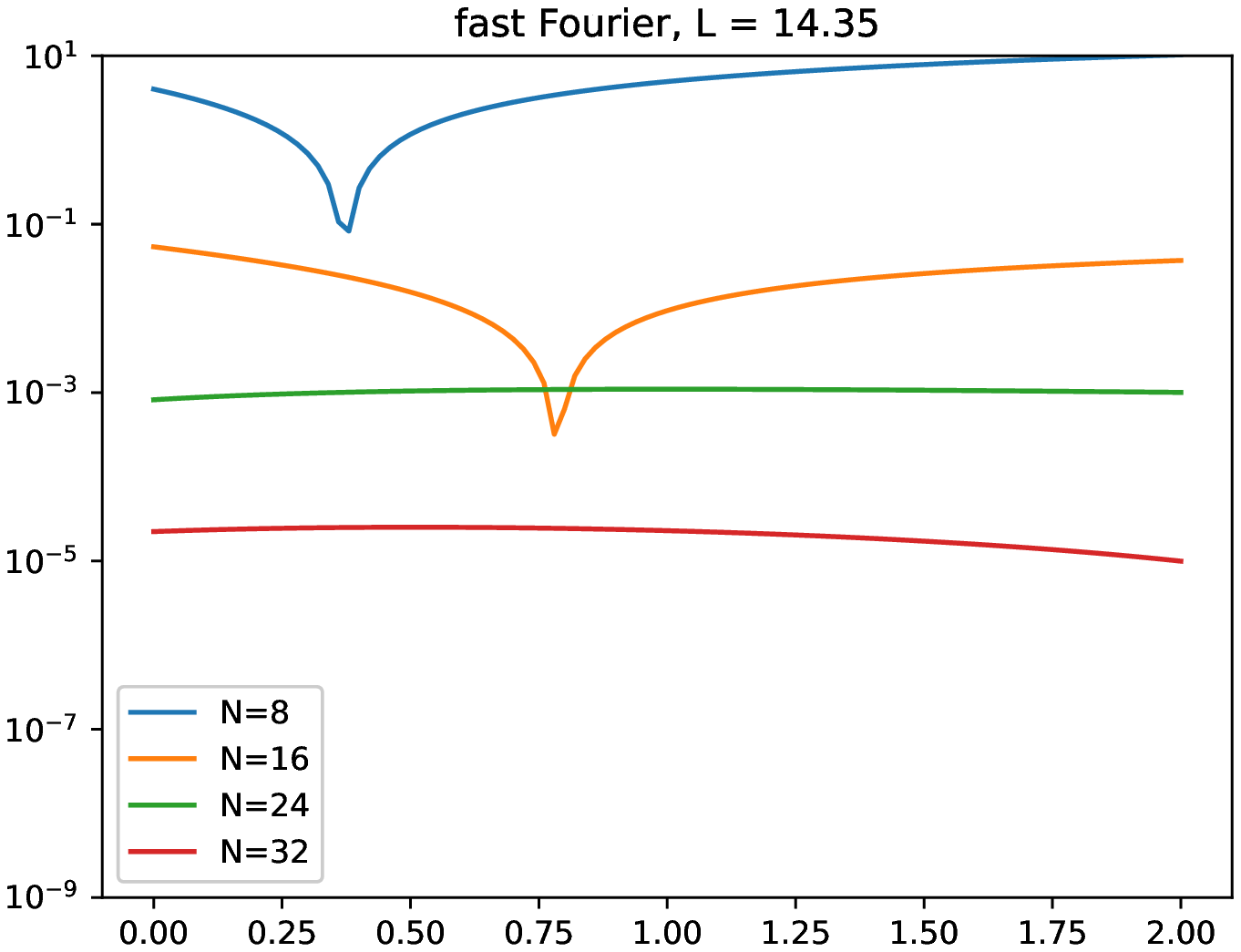}
   \includegraphics[width=.49\linewidth]{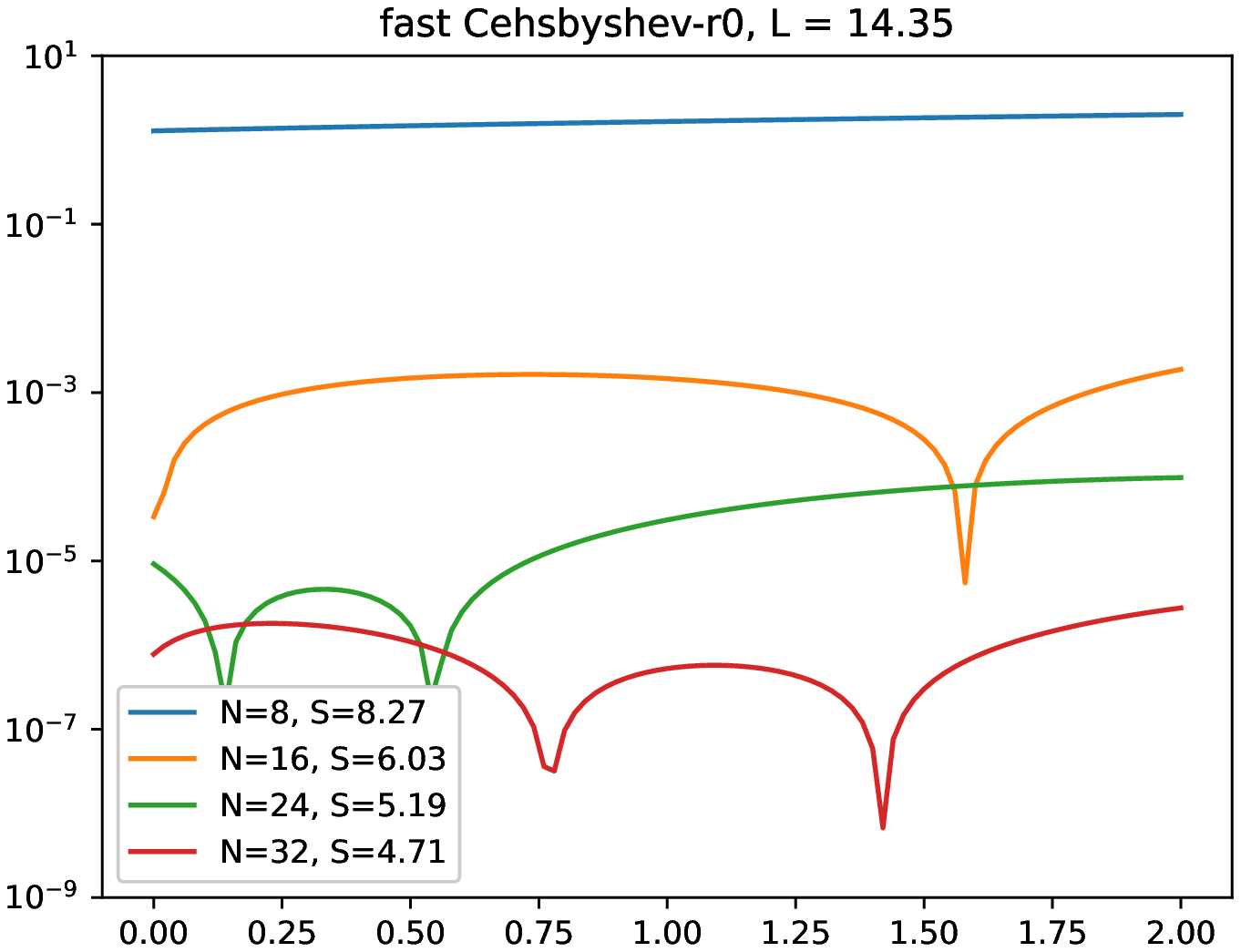}   
   \caption{(2D moments)
   The time evolution for the absolute error of the momentum flow $P_{11}$. 
   Left: the fast Fourier method.
   Right: the fast Chebyshev-0 method. }
   \label{2d_P11}
\end{figure}

\begin{figure}[htp!]
   \centering
    \includegraphics[width=.49\linewidth]{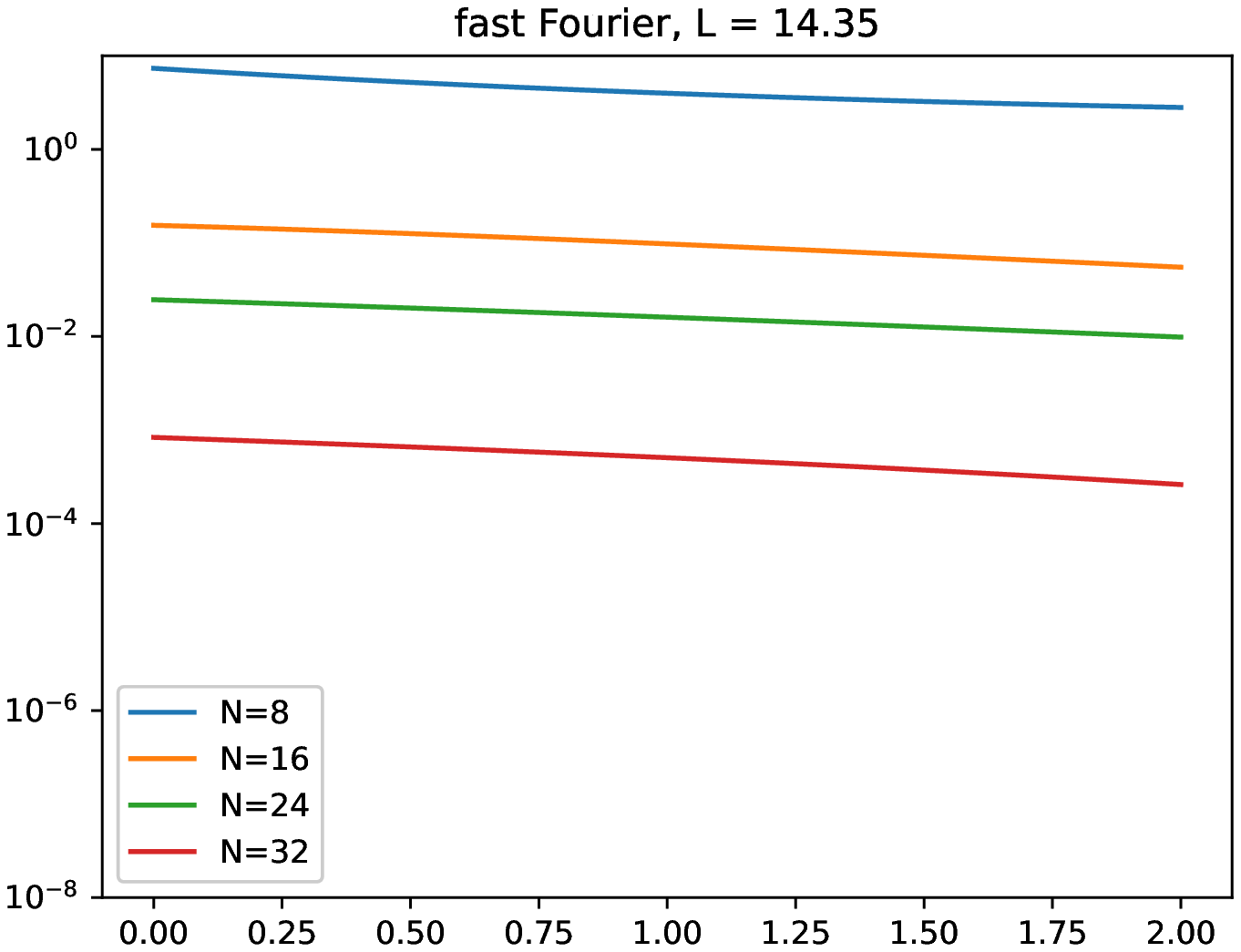}
   \includegraphics[width=.49\linewidth]{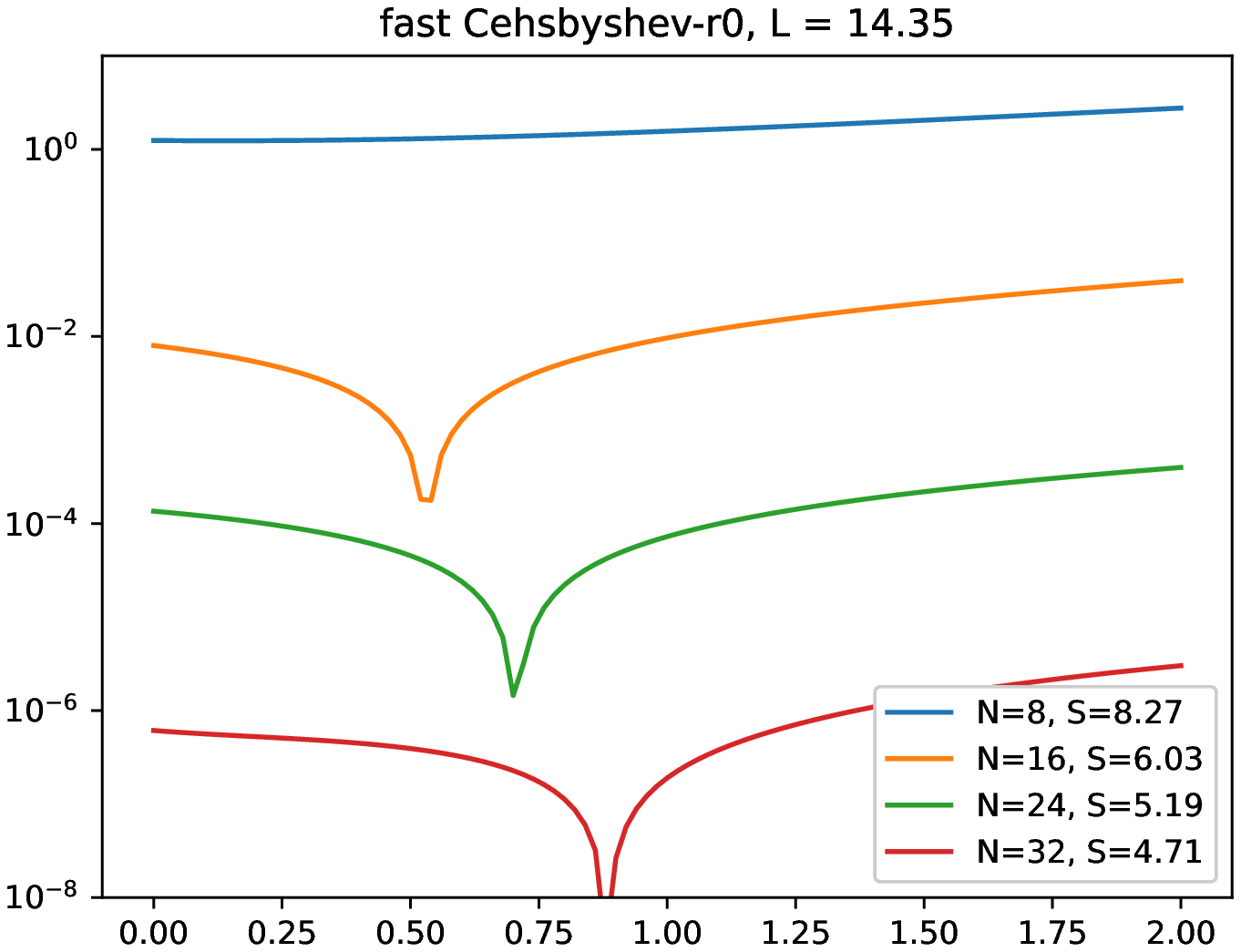}   
   \caption{(2D moments) 
   The time evolution for the absolute error of the momentum flow $P_{12}$.
   Left: the fast Fourier method.
   Right: the fast Chebyshev-0 method. }
   \label{2d_P12}
\end{figure}

\begin{figure}[htp!]
   \centering
   \includegraphics[width=.49\linewidth]{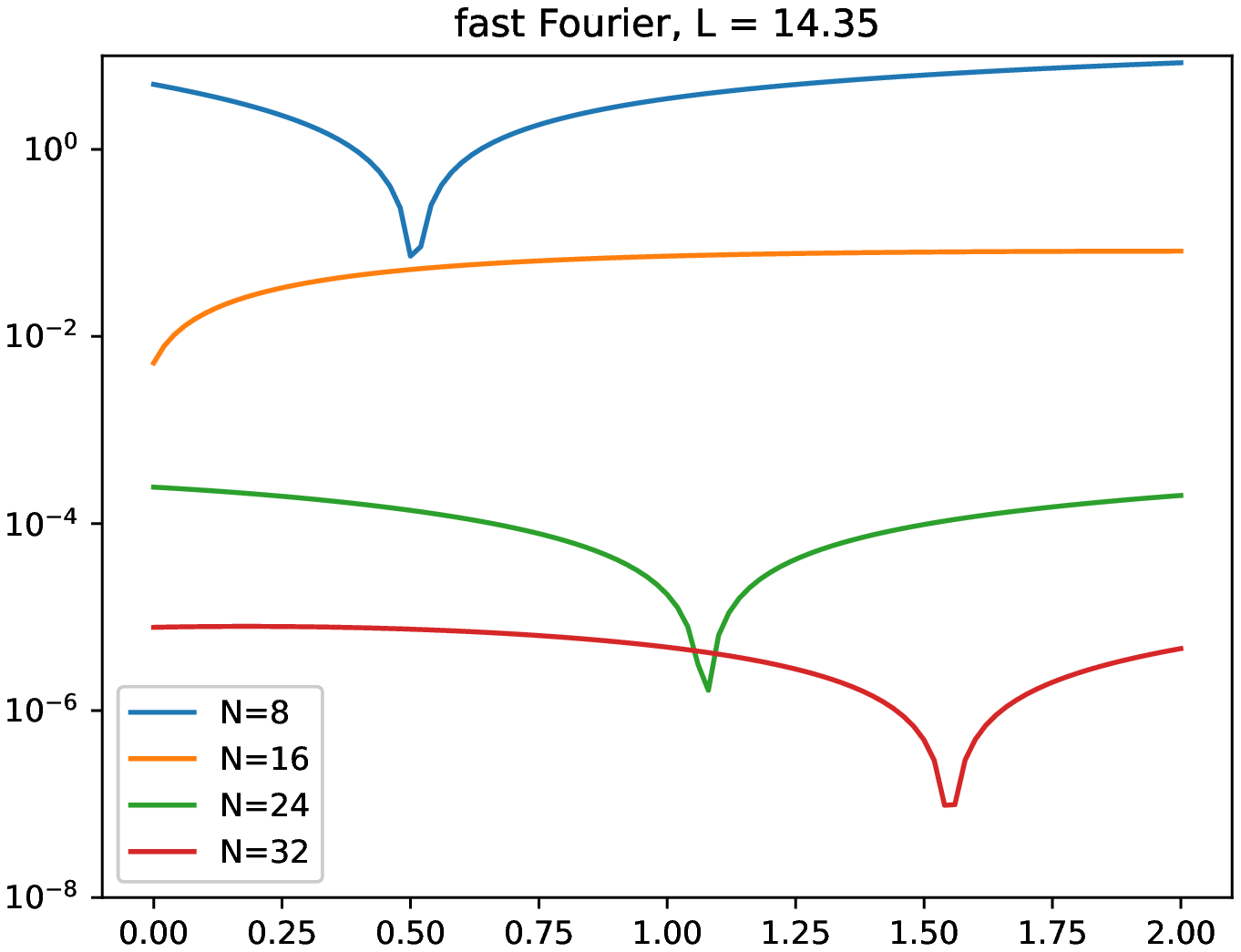}
   \includegraphics[width=.49\linewidth]{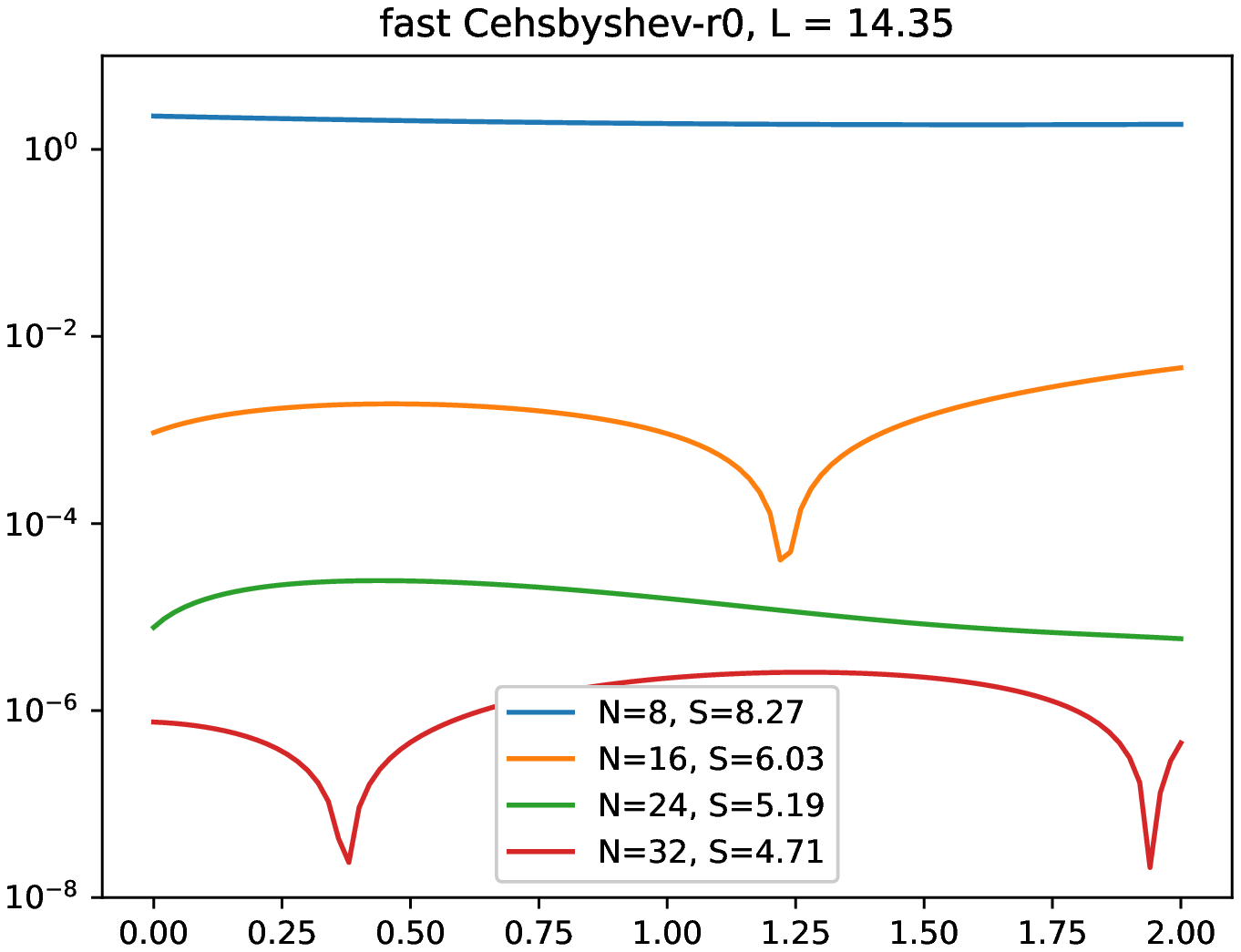}   
   \caption{(2D moments) 
   The time evolution for the absolute error of the momentum flow $P_{22}$.
   Left: the fast Fourier method.
   Right: the fast Chebyshev-0 method.}
   \label{2d_P22}
\end{figure}

\begin{figure}[htp!]
   \centering
   \includegraphics[width=.49\linewidth]{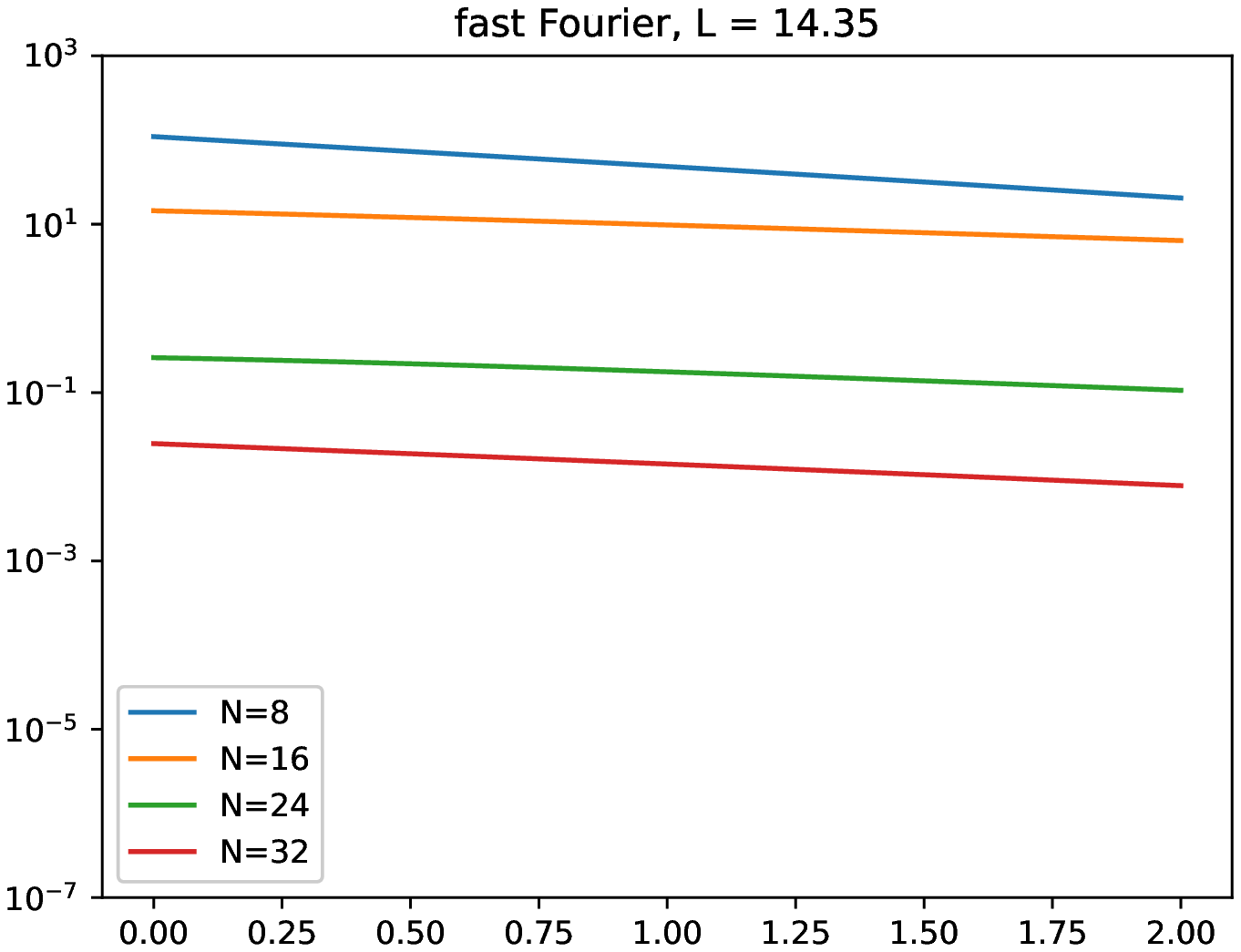}
   \includegraphics[width=.49\linewidth]{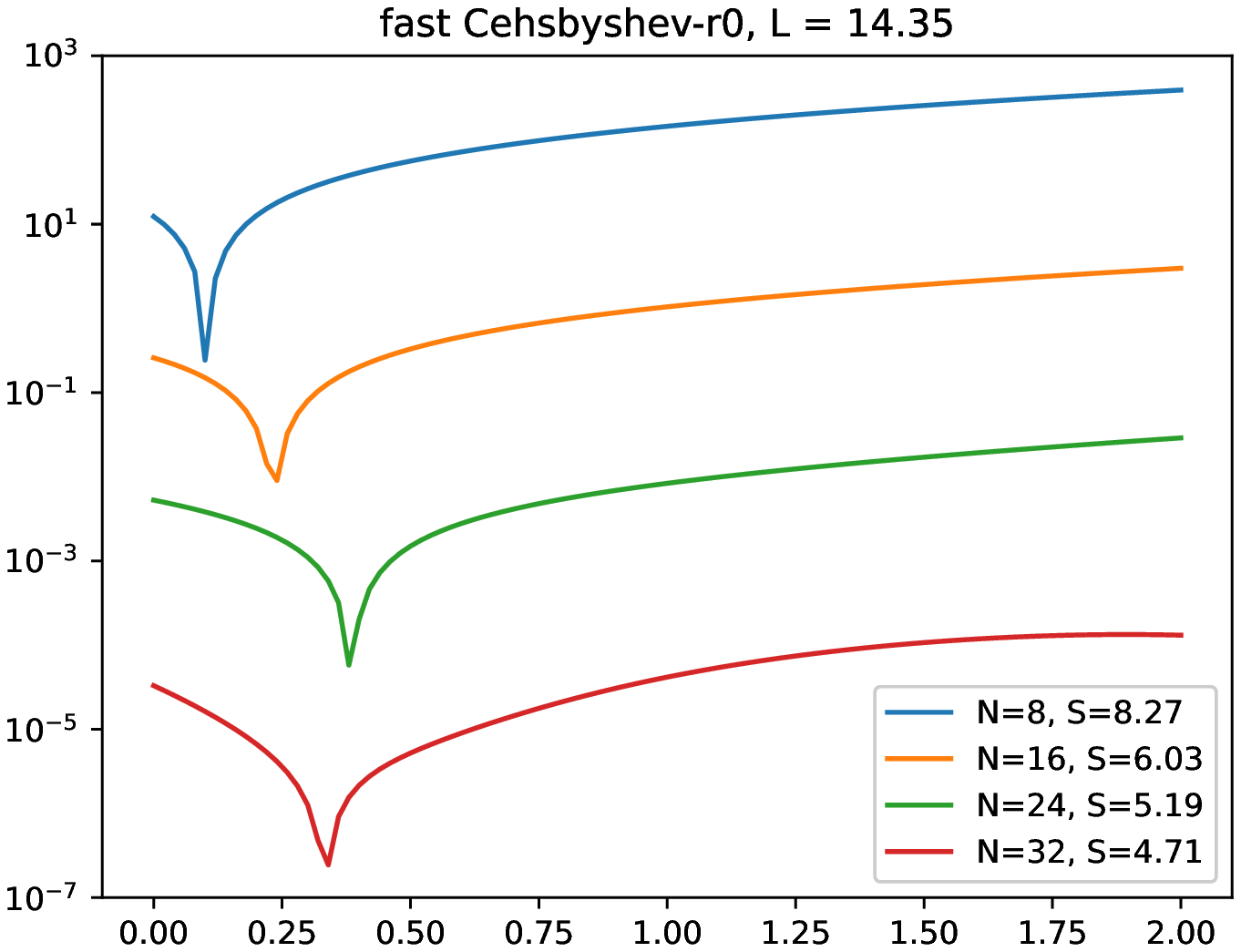}   
   \caption{(2D moments) 
   The time evolution for the absolute error of the momentum flow $q_{1}$.
   Left: the fast Fourier method.
   Right: the fast Chebyshev-0 method.}
   \label{2d_q1}
\end{figure}

\begin{figure}[htp!]
   \centering
     \includegraphics[width=.49\linewidth]{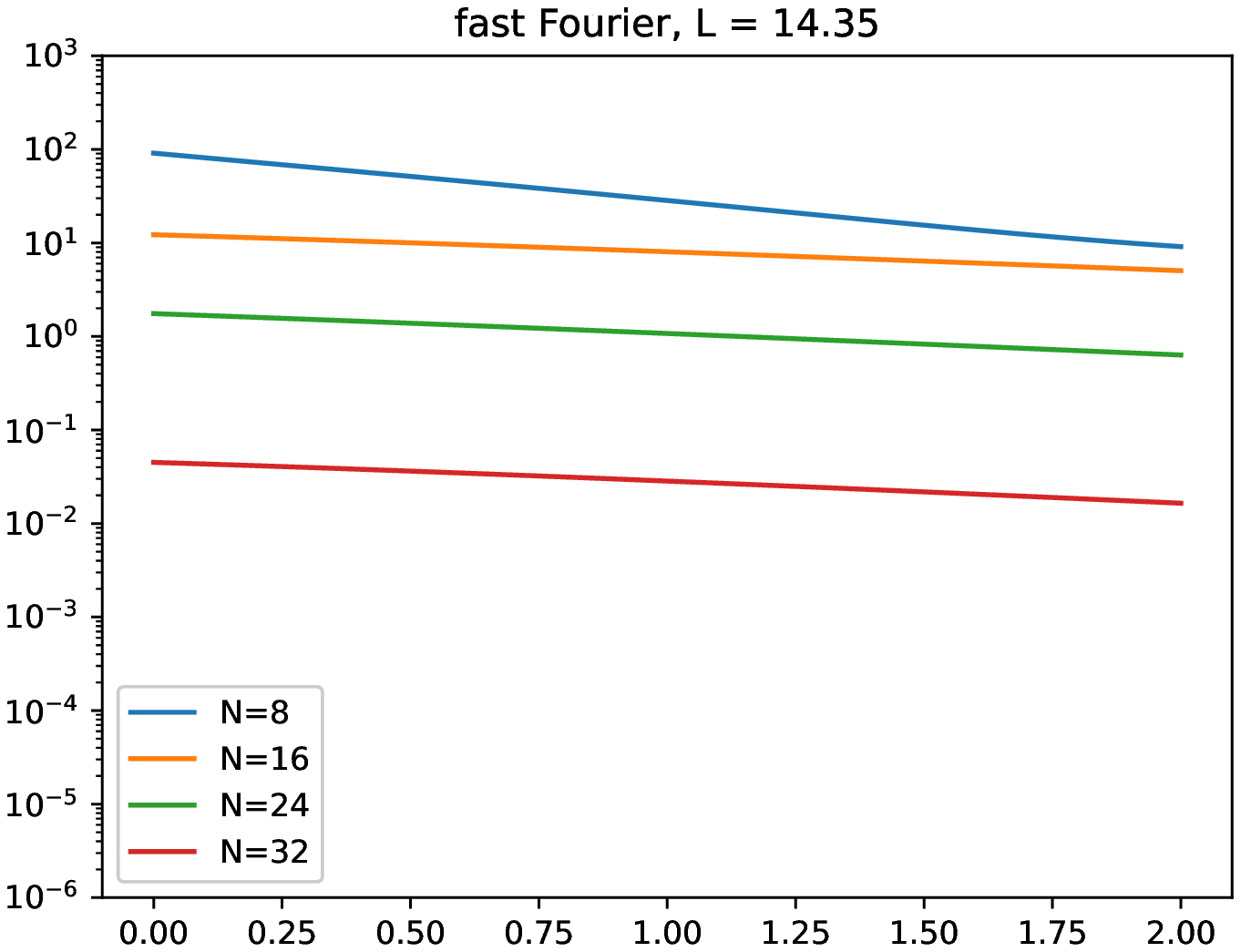}
   \includegraphics[width=.49\linewidth]{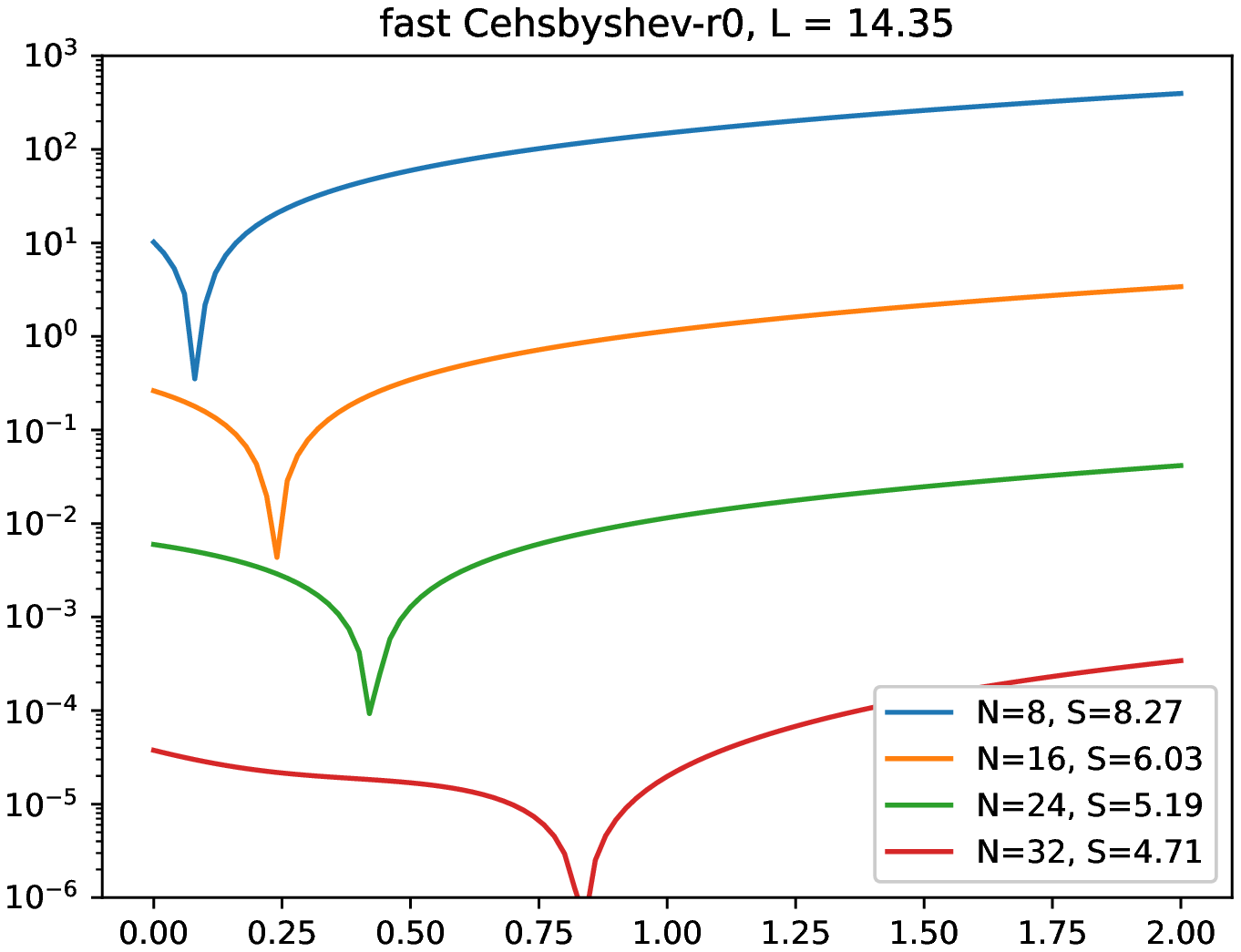}   
   \caption{(2D moments) 
    The time evolution for the absolute error of the momentum flow $q_{2}$.
   Left: the fast Fourier method.
   Right: the fast Chebyshev-0 method. }
   \label{2d_q2}
\end{figure}

\subsection{3D BKW solution}

We finally consider the 3D BKW solution. When $d=3$ and the collision kernel $\mathcal{B} \equiv 1/(4\pi)$, the following is a solution to the initial value problem (\ref{MultiDBoltz}):
\be
   f_{\text{BKW}}(t,\bmv) = 
   \f{1}{2(2\pi K)^{3/2}} \exp\left(-\f{\bmv^2}{2K}\right)
   \left(\f{5K-3}{K} + \f{1-K}{K^2} \bmv^2\right),
\ee
where $K=1-\exp(-t/6)$. As in 2D, we can obtain the exact collision operator as
\be
   Q_{\text{BKW}}(f)
   = \left\{ \left(-\f{3}{2K} + \f{\bmv^2}{2K^2}\right)f_{\text{BKW}} + 
   \f{1}{2(2 \pi K)^{3/2}} \exp\left(-\f{\bmv^2}{2K}\right)
   \left(\f{3}{K^2} + \f{K-2}{K^3} \bmv^2\right)
   \right\} K',
\ee
with $K' = \exp(-t/6)/6$.

Here we again compare the fast Fourier spectral method with the fast Chebyshev-0 method. In the former, we take domain $L=6.62$, $N_\rho = N$ quadrature points in the radial direction and $M_\bmsg=38$ Lebedev quadrature points on the unit sphere. In the latter, we choose $S$ adaptively based on $L$, $M_\bmv = N+2$ quadrature points for each dimension of $(\bmv, \bmvs)$ and $M_{\bmsg}$ Lebedev quadrature points on the unit sphere. The precision in NUFFT is selected as $\ep = 1e-14$. The $L^{\infty}$ error of $Q_{\rm BKW}(f)$ is estimated on a $30 \times 30 \times 30$ uniform grid in the rectangular domain $ [-6.3, 6.3]^3$ at time $t=6.5$.

\begin{table}[H]
   \begin{center}
   \begin{tabular}{ c|c|l } 
      \hline
      & fast Fourier $(M_{\bmsg} = 38)$  & fast Chebyshev-0  %& fast Chebyshev-0 (fixed $S = 5.0$)  
      \\  \hline
   $N=12$  
   & 2.36e-03 & 1.61e-02 ($M_{\bmsg}=14$)  %& 1.51e-02($M_{\rm shp}=14$) 
   \\ \hline
   $N=16$ 
   & 4.37e-04 & 2.72e-03 ($M_{\bmsg}=38$) %& 5.00e-03 ($M_{\rm shp}=38$)
   \\ \hline
   $N=20$ 
   & 3.62e-05 & 3.08e-06 ($M_{\bmsg}=86$) %&  6.72e-04 ($M_{\rm shp}=50$)
   \\ \hline
   $N=24$  
   & 3.61e-06 & 3.10e-08 ($M_{\bmsg}=146$) %& 1.4116e-06 ($M_{\rm shp}=86$)
   \\ \hline
   $N=28$  
   & 1.64e-07 & 1.58e-08 ($M_{\bmsg}=170$) %& -
   \\ \hline
   $N=32$  
   & 3.82e-08 & 7.16e-10 ($M_{\bmsg}=230$)%& -
   \\ \hline
   \end{tabular}
   \caption{(3D BKW) The $L^\ift$ error of $Q_{\text{BKW}}(f)$ at time $t=6.5$.}
   \label{3DBKW}
   \end{center}
\end{table}

The results are reported in Table~\ref{3DBKW}. Unlike the Fourier method for which $M_{\bmsg} = 38$ is enough (we have tested that larger value  of $M_{\bmsg}$ would not further increase the accuracy), we observe that more quadrature points on the sphere are needed to get the best accuracy in the Chebyshev method. As soon as $N\geq 20$, the Chebyshev method can always obtain better accuracy than the Fourier method.

%----------------------------------------------------------------------------------------
% Conclusion
%---------------------------------------------------------------------------------------- 
\section{Conclusion}
\label{Sec:con}

We introduced a Petrov-Galerkin spectral method for the spatially homogeneous Boltzmann equation in multi-dimensions. The mapped Chebyshev functions in $\mathbb{R}^d$ were carefully chosen to serve as the trial functions and test functions in the approximation. In the case of the algebraic mapping, we established a consistency result for approximation of the collision operator as well as the conservation property for the moments. Thanks to the close relation between the Chebyshev functions and the Fourier cosine series, we proposed a fast algorithm to alleviate the memory constraint in the precomputation and accelerate the online computation in the direct implementation. Through a series of numerical examples in 2D and 3D, we demonstrated that the proposed method can provide better accuracy (at least one or two digits for small $N$) in comparison to the popular Fourier spectral method.

%%%%%%%%%%%%%%%%%%%%%%%%%%%%%%%%%%%%%%%%%%%%%%%%%%%%%%%%%%%%%%%%%%%%%%
\begin{appendix}

\end{appendix}

\bibliographystyle{acm}
\bibliography{reference,hu_bibtex}

%----------------------------------------------------------------------------------------

\end{document}